\numberwithin{equation}{section}
\theoremstyle{definition}
\newtheorem{dfn}{Definition}[section]
\newtheorem{example}[dfn]{Example}
\newtheorem{rem}[dfn]{Remark}
\theoremstyle{plain}
\newtheorem{thm}[dfn]{Theorem}
\newtheorem{cor}[dfn]{Corollary}
\newtheorem{lem}[dfn]{Lemma}
\newtheorem*{thmA}{Theorem A}       
\newcommand\al{\alpha} 
\newcommand\ep{\varepsilon}
 \newcommand\la{\lambda}
  \newcommand\si{\sigma}
\newcommand\C{\mathbb{C}}\newcommand\Z{\mathbb{Z}}
\newcommand\K{\Bbbk}
\newcommand{\TGWC}[4]{\mathcal{C}_{#4}({#1},{#2},{#3})}  
\newcommand{\TGWA}[4]{\mathcal{A}_{#4}({#1},{#2},{#3})}
\newcommand{\TGWI}[4]{\mathcal{I}_{#4}({#1},{#2},{#3})}
\newcommand{\Lc}[1]{L_{#1}}
\newcommand{\Nc}[1]{N_{#1}}
\newcommand{\PM}[2]{\mathrm{PM}_{#1}({#2})}
\newcommand{\JJ}[1]{\mathcal{J}_{#1}}
\renewcommand{\hat}{\widehat}
\renewcommand{\tilde}{\widetilde}
\newcommand{\TGW}[2]{\mathsf{TGW}_{#1}({#2})}
\newcommand{\AlgG}[2]{\text{${#2}$-$\mathsf{GrAlg}_{#1}$}}
\newcommand{\Af}[1]{\mathcal{A}_{#1}}
\newcommand{\Ff}{\mathcal{F}}
\DeclareMathOperator{\Aut}{Aut}
\date{}
\begin{document}
\title{On the Consistency of Twisted Generalized Weyl Algebras}
\author{Vyacheslav Futorny}
\address{ Instituto de Matem\'atica e Estat\'\i stica,
Universidade de S\~ao Paulo,  S\~ao Paulo, 
 Brasil}
 \email{futorny@ime.usp.br}
 \author{Jonas T. Hartwig}
\address{Department of Mathematics, Stanford University, Stanford, CA, USA}
\email{jonas.hartwig@gmail.com }
\maketitle
\begin{abstract}
A twisted generalized Weyl algebra $A$ of degree $n$ depends on a base algebra $R$, $n$ commuting
automorphisms $\si_i$ of $R$, $n$ central elements $t_i$ of $R$
and on some additional scalar parameters.

In \cite[Lemma 1]{MT99} it is claimed that certain consistency conditions for $\si_i$ and $t_i$
are sufficient for the algebra to be nontrivial. However, in this paper we give an example which shows that
this is false. We also correct the statement by finding a new set of consistency conditions
and prove that the old and new conditions together are necessary and sufficient for the base
algebra $R$ to map injectively into $A$.
In particular they are sufficient for the algebra $A$ to be nontrivial.

We speculate that these consistency relations may play a role in other areas of mathematics, analogous to the role played by the Yang-Baxter equation in the theory of integrable systems.

\end{abstract}

\tableofcontents

\section{Introduction}


Let $R$ be an algebra over a commutative ring $\K$, $\si_1,\ldots,\si_n$  commuting $\K$-algebra automorphisms of $R$,
 $t_1,\ldots,t_n$  elements from the center of $R$, and
$\mu_{ij}$ an $n\times n$ matrix of invertible scalars from $\K$. To this data one associates
a \emph{twisted generalized Weyl algebra} $\TGWA{R}{\si}{t}{\mu}$, an associative
$\Z^n$-graded algebra (see Section \ref{sec:tgwadef} for definition). These algebras were
introduced by Mazorchuk and Turowska \cite{MT99} and they are generalizations of the
much studied generalized Weyl algebras, defined independently by Bavula \cite{B},
 Jordan \cite{Jordan1993}, and Rosenberg \cite{Rb} (there called \emph{hyperbolic rings}).

Simple weight modules over twisted generalized Weyl algebras have been studied in \cite{MT99},\cite{MPT},\cite{Ha06}.
In \cite{MT02} the authors classified bounded and unbounded $\ast$-representations over
twisted generalized Weyl algebras.
Interesting examples of twisted generalized Weyl algebras were given in \cite{MPT}.
In \cite{Ha09} new examples of twisted generalized Weyl algebras were constructed
from symmetric Cartan matrices.

It was claimed in \cite[Lemma 1]{MT99} (for the case when $\mu_{ij}=1\,\forall i,j$) and implicitly in \cite[Eq. (1)]{MT02} (for arbitrary $\mu_{ij}$) that a TGWA $\TGWA{R}{\si}{t}{\mu}$ is a nontrivial ring if the following relations are satisfied:
\begin{equation}\label{eq:tgwa_consistency1}
t_it_j=\mu_{ij}\mu_{ji}\si_i^{-1}(t_j)\si_j^{-1}(t_i),\qquad i\neq j.
\end{equation}
However in this paper we give an example (Example \ref{ex:trivialtgwa_1}) of a TGWA, $A=\TGWA{R}{\si}{t}{\mu}$, such that even though the datum $(R,\si,t,\mu)$ satisfies \eqref{eq:tgwa_consistency1}, the algebra $A$ is the trivial ring $\{0\}$. This shows that, in fact, \eqref{eq:tgwa_consistency1} is not sufficient for a TGWA to be a nontrivial ring.

Our main result in this paper is the discovery of a new consistency relation,
which together with \eqref{eq:tgwa_consistency1} gives a sufficient condition
for a TGWA to be a nontrivial ring. The precise statement is the following.

\begin{thmA}
Let $\K$ be a commutative unital ring,
$R$ be an associative $\K$-algebra, $n$ a positive integer,
$t=(t_1,\ldots,t_n)$ be an $n$-tuple of regular central elements of $R$,
$\si:\Z^n\to\Aut_\K(R)$ a group homomorphism, $\mu_{ij}$ ($i,j=1,\ldots,n, i\neq j$) invertible
elements from $\K$, and
$\TGWA{R}{\si}{t}{\mu}$ the corresponding twisted generalized Weyl algebra,
equipped with the canonical homomorphism of $R$-rings
$\rho:R\to \TGWA{R}{\si}{t}{\mu}$.
Then the following two statements are equivalent:
\begin{enumerate}[{\rm (a)}]
\item $\rho$ is injective,
\item the following two sets of relations are satisfied in $R$:
\begin{alignat}{2}
\label{eq:TGWA_consistency_relations1}
\si_i\si_j(t_it_j) &= \mu_{ij}\mu_{ji}\si_i(t_i)\si_j(t_j),
  &\qquad \forall i,j=1,\ldots,n,\, i\neq j, \\
\label{eq:TGWA_consistency_relations2}
t_j\si_i\si_k(t_j) &= \si_i(t_j)\si_k(t_j),
  &\qquad \forall i,j,k=1,\ldots,n,\, i\neq j\neq k\neq i.
\end{alignat} 
 \end{enumerate}
In particular, if \eqref{eq:TGWA_consistency_relations1} and \eqref{eq:TGWA_consistency_relations2}
are satisfied, then $\TGWA{R}{\si}{t}{\mu}$ is nontrivial iff $R$ is nontrivial.
Moreover, neither of the two conditions \eqref{eq:TGWA_consistency_relations1} and
 \eqref{eq:TGWA_consistency_relations2} imply the other.
\end{thmA}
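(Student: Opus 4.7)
The plan is to prove (a)$\Leftrightarrow$(b) by treating the two implications separately, and then to give two examples witnessing the final non-implication statement. The principal difficulty lies in (b)$\Rightarrow$(a).

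For (a)$\Rightarrow$(b) I would compute, for each of the two target identities, a degree-zero monomial in $\TGWA{R}{\si}{t}{\mu}$ via two distinct reduction paths; injectivity of $\rho$ then forces the two resulting elements of $R$ to coincide. For \eqref{eq:TGWA_consistency_relations1} the element $X_iY_jX_jY_i$ is a natural candidate: reducing by collapsing $Y_jX_j=t_j$ and then $X_iY_i=\si_i(t_i)$ gives $\si_i(t_j)\si_i(t_i)$, while reducing via the crossings $X_iY_j=\mu_{ij}Y_jX_i$ and $X_jY_i=\mu_{ji}Y_iX_j$ and then collapsing gives $\mu_{ij}\mu_{ji}\,\si_j^{-1}\si_i(t_i)\,t_j$; equating these in $R$, applying $\si_j$, and using centrality of the $t$'s together with commutativity of the $\si$'s produces \eqref{eq:TGWA_consistency_relations1}. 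For \eqref{eq:TGWA_consistency_relations2} the same idea is applied to a degree-zero monomial involving three distinct indices $i,j,k$ in which $X_j$ and $Y_j$ each appear twice interlaced with $X_i, Y_i, X_k, Y_k$; the two admissible reductions place $t_j\,\si_i\si_k(t_j)$ on one side and $\si_i(t_j)\si_k(t_j)$ on the other, up to a common regular factor whose cancellation uses regularity of the $t$'s.

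For (b)$\Rightarrow$(a) I plan to exhibit a faithful $\Z^n$-graded representation of $\TGWA{R}{\si}{t}{\mu}$ on a module $V=\bigoplus_{\gamma\in\Z^n}V_\gamma$, where each $V_\gamma$ is a rank-one $R$-bimodule obtained as a twist of $R$. The generators $X_i$ and $Y_i$ act as shifts $V_\gamma\to V_{\gamma\pm e_i}$ with coefficients built from appropriate products of $\si$-iterates of the $t_j$'s; the formulas for these coefficients are forced by the defining identities $Y_iX_i=t_i$, $X_iY_i=\si_i(t_i)$, $X_ir=\si_i(r)X_i$, and $Y_ir=\si_i^{-1}(r)Y_i$. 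Verifying that the remaining crossings $X_iY_j=\mu_{ij}Y_jX_i$ hold as operator identities on every $V_\gamma$ reduces to \eqref{eq:TGWA_consistency_relations1}; and verifying well-definedness when $X_j$ (or equivalently $Y_j$) is propagated past two distinct shifts in indices $i$ and $k$ reduces to \eqref{eq:TGWA_consistency_relations2}. Once $V$ is shown to be a faithful $A$-module, the degree-zero component $V_0$ is isomorphic to $R$ as an $R$-bimodule, and the composition of $\rho$ with the representation on $V_0$ is left multiplication of $R$ on itself, hence injective; therefore $\rho$ itself is injective.

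For the final assertion, I would present two small examples. To witness \eqref{eq:TGWA_consistency_relations2} without \eqref{eq:TGWA_consistency_relations1}, take every $\si_i$ to be the identity automorphism: \eqref{eq:TGWA_consistency_relations2} reduces to the tautology $t_j^2=t_j^2$, while \eqref{eq:TGWA_consistency_relations1} becomes $t_it_j=\mu_{ij}\mu_{ji}t_it_j$ and fails whenever $\mu_{ij}\mu_{ji}\neq 1$, using regularity of $t_it_j$. To witness \eqref{eq:TGWA_consistency_relations1} without \eqref{eq:TGWA_consistency_relations2}, I would construct an example with $n=3$ in an appropriate (Laurent) polynomial ring, choosing the $t_j$'s so that each ratio $\si_i\si_j(t_it_j)/\bigl(\si_i(t_i)\si_j(t_j)\bigr)$ lies in $\K^{\times}$ (so that \eqref{eq:TGWA_consistency_relations1} can be arranged by a suitable choice of $\mu_{ij}\mu_{ji}$), while $t_j\,\si_i\si_k(t_j)\neq\si_i(t_j)\si_k(t_j)$ for some triple $(i,j,k)$, so that \eqref{eq:TGWA_consistency_relations2} fails.
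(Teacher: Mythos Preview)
Your arguments for (a)$\Rightarrow$(b) and for the independence of the two relations are essentially correct and close to the paper's. The difficulty is with (b)$\Rightarrow$(a): the representation you sketch cannot be built with each $V_\gamma$ a twist of $R$ and shift-coefficients lying in $R$. Writing $X_i\cdot v_\gamma=p_i(\gamma)v_{\gamma+e_i}$ and $Y_i\cdot v_\gamma=q_i(\gamma)v_{\gamma-e_i}$, the relations $Y_iX_i=t_i$ and $X_iY_i=\si_i(t_i)$ force $\si_i^{-1}(p_i(\gamma))\,q_i(\gamma+e_i)=t_i$; with the natural choice $p_i\equiv 1,\ q_i\equiv t_i$, the crossing $X_iY_j=\mu_{ij}Y_jX_i$ then reads $\si_i(t_j)=\mu_{ij}t_j$, which is strictly stronger than \eqref{eq:TGWA_consistency_relations1}, and any other choice would require nontrivial factorizations of $t_i$ in $R$ that need not exist. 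More fundamentally, the TGW construction imposes \emph{no} relation between $X_iX_j$ and $X_jX_i$, so your description of where \eqref{eq:TGWA_consistency_relations2} enters (``propagating $X_j$ past two distinct shifts'') has no content until such commutation rules are available---and they only become available once the $t_i$ are invertible.

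The missing ingredient is localization. The paper first passes to $T^{-1}R$, where $T$ is the $\Z^n$-invariant multiplicative set generated by the $t_i$; there each $X_i$ becomes a unit, one has genuine commutation rules $X_jX_i=\tau_{ji}^{-1}X_iX_j$ with $\tau_{ji}=\mu_{ij}\si_j(t_j)/\si_i\si_j(t_j)\in T^{-1}R$, and a diamond-lemma computation (equivalently, your representation picture, but now with coefficients in $T^{-1}R$) goes through. In that setting \eqref{eq:TGWA_consistency_relations1} becomes $\tau_{ij}\tau_{ji}=1$ and \eqref{eq:TGWA_consistency_relations2} becomes $\si_k(\tau_{ij})=\tau_{ij}$, which is precisely the resolvability of the three-letter overlap $X_k^{\pm}X_j^{\pm}X_i^{\pm}$. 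Injectivity is then transported back to $R$ using regularity of the $t_i$ (so that $R\hookrightarrow T^{-1}R$) together with a functoriality argument for TGW algebras. Without this localize-and-return step, the plan does not go through.
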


One may note that \eqref{eq:TGWA_consistency_relations1} and \eqref{eq:TGWA_consistency_relations2} may be expressed as
the following identities in the localization $T^{-1}R$,  where $T$ is the multiplicative submonoid of $R$ generated by all $\sigma_g(t_i)$ for all $g\in\Z^n, i=1,\ldots,n$:
\begin{alignat}{2}
\tau_{ij}\tau_{ji}&=1,
  &\qquad \forall i,j=1,\ldots,n,\, i\neq j, \\
\si_k(\tau_{ij})&=\tau_{ij},
  &\qquad \forall i,j,k=1,\ldots,n,\, i\neq j\neq k\neq i,
\end{alignat}
where
\begin{equation}
\tau_{ij}=\frac{\mu_{ji}\si_j(t_j)}{\si_i\si_j(t_j)}.
\end{equation}

%
%
%
%
%
%

We note that the consistency relations \eqref{eq:TGWA_consistency_relations1},\eqref{eq:TGWA_consistency_relations2} play an analogous role for twisted generalized Weyl algebras as the quantum Yang-Baxter equation plays for Zamolodchikov algebras in factorized $S$-matrix models \cite[Section~1.1.1]{GomRuiSie1996}, and for Faddeev-Reshetikhin-Takhtajan algebras.
Therefore we pose the following questions.
\begin{enumerate}[{\rm (a)}]
\item Is there a direct relation between the consistency relations
\eqref{eq:TGWA_consistency_relations1},\eqref{eq:TGWA_consistency_relations2}
 and the quantum Yang-Baxter equation?
\item Can one construct some physical model for which
relations \eqref{eq:TGWA_consistency_relations1},\eqref{eq:TGWA_consistency_relations2} appear as a condition for the model to be integrable (exactly solvable)?
\end{enumerate}
We leave these open questions to be addressed in future publications.

The structure of the paper is as follows. We first give some constructions of twisted generalized Weyl algebras in full generality. After recalling the definition in Section \ref{sec:tgwadef}, we show in Section \ref{sec:functoriality} that the construction of a twisted generalized Weyl
algebra is functorial in the initial data. In Sections \ref{sec:tgwaquotients} and Section \ref{sec:tgwalocalization}
we study two natural operations: taking quotients and localizing.
In Section \ref{sec:muconsistency} we prove Theorem~A.
In Section \ref{sec:weaklymuconsistent} (Theorem \ref{thm-main-1}(a)), we show that consistent TGW algebras with all $t_i$ invertible are in fact $\Z^n$-crossed product algebras over $R$. Thus these TGW algebras may be thought of as producing solutions to equations \eqref{eq:crossedproducteqs} required for the $\Z^n$-action and twisted $2$-cocycle map to produce nontrivial crossed product algebras.
We also define a notion of weak consistency and prove in Theorem \ref{thm-main-1}(b) that there is an embedding of any regular, weakly consistent TGW algebra into an associated $\Z^n$-crossed product algebra.

\subsection*{Acknowledgements}
This work was carried out during the second author's postdoc at IME-USP,
funded by FAPESP, processo 2008/10688-1. 
The first  author is supported in part by the CNPq grant (301743/
2007-0) and by the Fapesp grant (2005/60337-2). 


\subsection*{Notation and conventions}
By ``ring'' (``algebra'') we mean unital associative ring (algebra).
All ring and algebra morphisms are required to be unital.
By ``ideal'' we mean two-sided ideal unless otherwise stated.
An element $x$ of a ring $R$ is said to be \emph{regular in $R$} if
for all nonzero $y\in R$ we have $xy\neq 0$ and $yx\neq 0$.
The set of invertible elements in a ring $R$ will be denoted by $R^\times$.

Let $R$ be a ring.
Recall that an $R$-ring is a ring $A$ together with a ring morphism $R\to A$.
Let $X$ be a set. Let $RXR$ be the free $R$-bimodule on $X$.
The free $R$-ring $F_R(X)$ on $X$ is defined as the tensor algebra of the free
$R$-bimodule on $X$: $F_R(X)=\oplus_{n\ge 0} (RXR)^{\otimes_R n}$
where $(RXR)^{\otimes_R 0}=R$ by convention and the ring morphism $R\to F_R(X)$
is the inclusion into the degree zero component.

Throughout this paper we fix a commutative ring $\K$.
\section{Definition of TGW algebras} \label{sec:tgwadef}
We recall the definition of twisted generalized Weyl algebras \cite{MT99,MPT}.
Here we emphasize the initial data more than usual, which will be
useful in the next section to express the functoriality of the construction.

\begin{dfn}[TGW datum]
Let $n$ a positive integer.
A \emph{twisted generalized Weyl datum (over $\K$ of degree $n$)}
 is a triple $(R,\si,t)$ where
\begin{itemize}
\item $R$ is a unital associative $\K$-algebra,
\item $\si$ is a group homomorphism $\si:\Z^n\to\Aut_\K(R)$, $g\mapsto \si_g$,
\item $t$ is a function $t:\{1,\ldots,n\}\to Z(R)$, $i\mapsto t_i$.
\end{itemize} 
A \emph{morphism} between TGW data over $\K$ of degree $n$, 
\[\varphi:(R,\si,t)\to (R',\si',t')\]
is a $\K$-algebra morphism $\varphi: R\to R'$ such that
$\varphi \si_i=\si_i'\varphi$ and $\varphi(t_i)=t_i'$ for all $i\in\{1,\ldots,n\}$.
We let $\TGW{n}{\K}$ denote the category whose objects are the TGW data
over $\K$ of degree $n$ and morphisms are as above.
\end{dfn}
For $i\in\{1,\ldots,n\}$ we put $\si_i=\si_{e_i}$, where $\{e_i\}_{i=1}^n$ is the standard
$\Z$-basis for $\Z^n$.
A \emph{parameter matrix (over $\K^\times$ of size $n$)} is an $n\times n$
matrix $\mu=(\mu_{ij})_{i\neq j}$ without diagonal where $\mu_{ij}\in\K^\times\,\forall i\neq j$. The set of all parameter matrices over $\K^\times$ of size $n$
will be denoted by $\PM{n}{\K}$.


\begin{dfn}[TGW construction]
Let $n\in\Z_{>0}$, $(R,\si,t)$ be an object in $\TGW{n}{\K}$, and $\mu\in\PM{n}{\K}$.
The \emph{twisted generalized Weyl construction with parameter matrix $\mu$
associated to the TGW datum $(R,\si,t)$} is denoted by $\TGWC{R}{\si}{t}{\mu}$
and is defined as the free $R$-ring on the set $\{x_i,y_i\mid i=1,\ldots,n\}$
modulo the two-sided ideal generated by the following set of elements:
\begin{subequations}\label{eq:tgwarels}
\begin{alignat}{3}
\label{eq:tgwarels1}
x_ir  &-\si_i(r)x_i,  &\quad y_ir&-\si_i^{-1}(r)y_i, 
 &\quad \text{$\forall r\in R,\, i\in\{1,\ldots,n\}$,} \\
\label{eq:tgwarels2}
y_ix_i&-t_i, &\quad x_iy_i&-\si_i(t_i),
 &\quad \text{$\forall i\in\{1,\ldots,n\}$,} \\
\label{eq:tgwarels3}
&&\quad x_iy_j&-\mu_{ij}y_jx_i,
 &\quad \text{$\forall i,j\in\{1,\ldots,n\},\, i\neq j$.}
\end{alignat}
\end{subequations}
\end{dfn}
The images in $\TGWC{R}{\si}{t}{\mu}$ of the elements $x_i, y_i$  will
be denoted by $\hat X_i, \hat Y_i$ respectively.
The ring $\TGWC{R}{\si}{t}{\mu}$ has a $\Z^n$-gradation
given by requiring $\deg \hat X_i=e_i, \deg \hat Y_i=-e_i, \deg r=0\, \forall r\in R$.
Let $\TGWI{R}{\si}{t}{\mu}\subseteq \TGWC{R}{\si}{t}{\mu}$ be 
the sum of all graded ideals $J\subseteq \TGWC{R}{\si}{t}{\mu}$ having
zero intersection with the degree zero component, i.e. such that
$\TGWC{R}{\si}{t}{\mu}_0\cap J=\{0\}$.
It is easy to see that $\TGWI{R}{\si}{t}{\mu}$ is the unique maximal graded ideal having
zero intersection with the degree zero component.
\begin{dfn}[TGW algebra]
The \emph{twisted generalized Weyl algebra with parameter matrix $\mu$
associated to the TGW datum $(R,\si,t)$} is
denoted $\TGWA{R}{\si}{t}{\mu}$ and is defined as the
quotient $\TGWA{R}{\si}{t}{\mu}:=\TGWC{R}{\si}{t}{\mu} / \TGWI{R}{\si}{t}{\mu}$.
\end{dfn}
Since $\TGWI{R}{\si}{t}{\mu}$ is graded,
$\TGWA{R}{\si}{t}{\mu}$ inherits a $\Z^n$-gradation from $\TGWC{R}{\si}{t}{\mu}$.
The images in $\TGWA{R}{\si}{t}{\mu}$ of the elements $\hat X_i, \hat Y_i$ will be
denoted by $X_i, Y_i$.
By a \emph{monic monomial} in a TGW construction $\TGWC{R}{\si}{t}{\mu}$
(respectively TGW algebra $\TGWA{R}{\si}{t}{\mu}$)
we will mean a product of elements from
$\{\hat X_i, \hat Y_i\mid i=1,\ldots,n\}$
(respectively $\{X_i, Y_i\mid i=1,\ldots,n\}$).
 
The following statements are easy to check.
\begin{lem}\label{lem:easy}
\begin{enumerate}[{\rm (a)}]
\item \label{it:monomialgeneration}
$\TGWA{R}{\si}{t}{\mu}$ (respectively $\TGWC{R}{\si}{t}{\mu}$) is generated as a left and as a right $R$-module
by the monic monomials in $X_i, Y_i\,(i=1,\ldots,n)$ (respectively
$\hat X_i, \hat Y_i\, (i=1,\ldots,n)$).

\item
\label{it:A0}
The degree zero component of $\TGWA{R}{\si}{t}{\mu}$ is equal to
the image of $R$ under the natural map $\rho:R\to\TGWA{R}{\si}{t}{\mu}$.

\item\label{it:zerointersection}
Any nonzero graded ideal of $\TGWA{R}{\si}{t}{\mu}$ has nonzero intersection
with the degree zero component.
\end{enumerate}
\end{lem}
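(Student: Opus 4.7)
My plan treats the three parts in order; (a) and (c) are formal consequences of the definitions, while (b) contains the main combinatorial content. For (a), I would start from the presentation of $\TGWC{R}{\si}{t}{\mu}$ as a quotient of the free $R$-ring $F_R(\{x_i,y_i\})$. Every element of this free ring is an $R$-linear combination of ``mixed words'' of the form $r_0 z_1 r_1 z_2 \cdots z_k r_k$, with $z_j\in\{x_i,y_i\}$ and $r_j\in R$. The relations \eqref{eq:tgwarels1} allow each intermediate coefficient $r_j$ to be commuted past a neighbouring generator at the cost of applying $\si_i^{\pm1}$; iterating, every coefficient can be transported to the leftmost position, realizing the element as a left $R$-linear combination of monic monomials. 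Moving coefficients rightward gives the right-module version, and both properties descend to the quotient $\TGWA{R}{\si}{t}{\mu}$.

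For (b), the inclusion $\rho(R)\subseteq\TGWA{R}{\si}{t}{\mu}_0$ is immediate. For the reverse, note that $\TGWI{R}{\si}{t}{\mu}\cap\TGWC{R}{\si}{t}{\mu}_0=\{0\}$ by construction, so the canonical surjection $\pi:\TGWC{R}{\si}{t}{\mu}\to\TGWA{R}{\si}{t}{\mu}$ restricts to an isomorphism on the degree-zero parts. It therefore suffices to show $\TGWC{R}{\si}{t}{\mu}_0$ equals the image of $R$ in $\TGWC{R}{\si}{t}{\mu}$, and by (a) this reduces to showing that every monic monomial of $\Z^n$-degree zero lies in that image. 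I would argue by induction on the length: find, possibly after rearranging by \eqref{eq:tgwarels3}, an adjacent pair of the form $\hat X_i\hat Y_i$ or $\hat Y_i\hat X_i$, collapse it via \eqref{eq:tgwarels2} to $\si_i(t_i)$ or $t_i$, pull the resulting $R$-element outside using \eqref{eq:tgwarels1}, and invoke the induction hypothesis on the shorter degree-zero monomial.

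I expect the main obstacle to be the combinatorial sub-step of producing such an adjacent reducible pair. The guiding observation is that for any index $i$ present in the monomial, the sub-sequence of $\hat X_i$'s and $\hat Y_i$'s has equal numbers of each and hence contains two consecutive entries of opposite type within that sub-sequence; these can then be brought adjacent in the full monomial via \eqref{eq:tgwarels3}, since $\hat X_i$ commutes past any $\hat Y_j$ with $j\neq i$ and $\hat Y_i$ past any $\hat X_j$ with $j\neq i$. When same-type letters block the motion, one continues applying swaps until some reducible adjacency, possibly for a different index, arises.

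For (c), let $I\subseteq\TGWA{R}{\si}{t}{\mu}$ be a graded ideal with $I\cap\TGWA{R}{\si}{t}{\mu}_0=\{0\}$, and set $J=\pi^{-1}(I)$, a graded ideal of $\TGWC{R}{\si}{t}{\mu}$ containing $\ker\pi=\TGWI{R}{\si}{t}{\mu}$. Since $\pi$ restricts to an isomorphism on degree-zero parts, $J\cap\TGWC{R}{\si}{t}{\mu}_0=\{0\}$, so by the maximality of $\TGWI{R}{\si}{t}{\mu}$ among graded ideals with this property, $J\subseteq\TGWI{R}{\si}{t}{\mu}$, whence $J=\TGWI{R}{\si}{t}{\mu}$ and $I=\{0\}$.
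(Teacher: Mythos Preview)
Your arguments for (a) and (c) are correct and complete; the paper itself offers no proof of this lemma beyond the phrase ``easy to check'', so there is nothing to compare against and your write-up is already more careful than the original.

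For (b), your reduction to showing that every degree-zero monic monomial in $\TGWC{R}{\si}{t}{\mu}$ lies in the image of $R$ is exactly right, and the inductive framework is sound. However, the step you yourself flag as the main obstacle is not actually justified. Relation \eqref{eq:tgwarels3} only lets $\hat X_i$ pass a $\hat Y_j$ (equivalently $\hat Y_j$ pass an $\hat X_i$) for $j\neq i$; there is no relation allowing $\hat X_i$ to pass $\hat X_j$ or $\hat Y_i$ to pass $\hat Y_j$. So when a same-type letter blocks the motion of your chosen $\hat X_i$ toward its partner $\hat Y_i$, the sentence ``one continues applying swaps until some reducible adjacency, possibly for a different index, arises'' is not a procedure: you have not said which swaps to apply, nor given any invariant guaranteeing that this does not cycle indefinitely.

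A clean way to close the gap: first apply only swaps of the form $\hat X_a\hat Y_b\to\mu_{ab}\hat Y_b\hat X_a$ for $a\neq b$. Each such swap strictly decreases the quantity $\#\{(p,q):p<q,\ \epsilon_p=+,\ \epsilon_q=-\}$, so this terminates; if along the way an adjacent $\hat X_a\hat Y_a$ appears, reduce and invoke the induction hypothesis. Otherwise you arrive at a word of shape $\hat Y_{j_1}\cdots\hat Y_{j_a}\hat X_{k_1}\cdots\hat X_{k_a}$, and degree zero forces the multisets $\{j_1,\dots,j_a\}$ and $\{k_1,\dots,k_a\}$ to coincide. Now move $\hat X_{k_1}$ leftward: every left neighbour is some $\hat Y_{j_m}$, so each step is either a legal swap ($j_m\neq k_1$) or a reduction ($j_m=k_1$), and since $k_1$ occurs among the $j_m$'s a reduction must occur within at most $a$ steps. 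This makes the induction on length go through.
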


\begin{dfn}[$\mu$-Consistency]
Let $(R,\si,t)$ be a TGW datum over $\K$ of degree $n$ and
$\mu$ be a parameter matrix over $\K^\times$ of size $n$.
We say that $(R,\si,t)$ is \emph{$\mu$-consistent} if the
canonical map $\rho:R\to \TGWA{R}{\si}{t}{\mu}$ is injective.
\end{dfn}
Since $\TGWI{R}{\si}{t}{\mu}$ has zero intersection with the zero-component,
$(R,\si,t)$ is 
$\mu$-consistent iff the canonical map $R\to \TGWC{R}{\si}{t}{\mu}$ is 
injective.
Even in the cases when $\rho$ is not injective, we will often
view $\TGWA{R}{\si}{t}{\mu}$ as a left $R$-module and write for example
$rX_i$ instead of $\rho(r)X_i$.

\begin{dfn}[Regularity]
A TGW datum $(R,\si,t)$ is called \emph{regular} if $t_i$
is regular in $R$ for all $i$.
\end{dfn}

\begin{lem} \label{lem:tgwatgwc}
If $t_i\in R^\times$ for all $i$, then
the canonical projection $\TGWC{R}{\si}{t}{\mu}\to\TGWA{R}{\si}{t}{\mu}$
is an isomorphism.
\end{lem}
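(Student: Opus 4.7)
\medskip

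The plan is to show that the graded ideal $\TGWI{R}{\si}{t}{\mu}\subseteq\TGWC{R}{\si}{t}{\mu}$ is zero, which by definition of the TGW algebra as a quotient will immediately give that the projection is an isomorphism.

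The key observation is that once each $t_i$ is a unit of $R$, each generator $\hat X_i$ (and $\hat Y_i$) becomes invertible in $\TGWC{R}{\si}{t}{\mu}$. Indeed, from relation \eqref{eq:tgwarels2} we have $\hat Y_i\hat X_i=t_i$ and $\hat X_i\hat Y_i=\si_i(t_i)$, so $t_i^{-1}\hat Y_i$ is a left inverse of $\hat X_i$ and $\hat Y_i\si_i(t_i)^{-1}$ is a right inverse; since $\si_i(t_i)^{-1}=\si_i(t_i^{-1})$ the twist relation \eqref{eq:tgwarels1} shows that these two expressions agree, so $\hat X_i$ is invertible with a homogeneous inverse of degree $-e_i$, and similarly for $\hat Y_i$. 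Consequently, for every $g=\sum g_ie_i\in\Z^n$ there is a monic monomial (a product of the $\hat X_i^{\pm 1}$) which is an invertible homogeneous element $m_g$ of $\TGWC{R}{\si}{t}{\mu}$ of degree $g$.

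Now take any graded ideal $J$ with $J\cap\TGWC{R}{\si}{t}{\mu}_0=\{0\}$ and any homogeneous $a\in J$ of degree $g$. Since $m_g$ is a unit of $\TGWC{R}{\si}{t}{\mu}$ and $m_g^{-1}$ is homogeneous of degree $-g$, the element $a\,m_g^{-1}$ lies in $J\cap\TGWC{R}{\si}{t}{\mu}_0$, hence is zero; multiplying on the right by $m_g$ gives $a=0$. Since $J$ is graded, $J=0$. Applying this to $J=\TGWI{R}{\si}{t}{\mu}$ yields $\TGWI{R}{\si}{t}{\mu}=0$, and therefore the canonical projection $\TGWC{R}{\si}{t}{\mu}\to\TGWA{R}{\si}{t}{\mu}$ is an isomorphism.

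The only non-routine point is verifying that $\hat X_i$ actually has a two-sided inverse in $\TGWC{R}{\si}{t}{\mu}$ (rather than merely one-sided inverses), but this is immediate from the twist relation together with the equality $t_i^{-1}\hat Y_i=\hat Y_i\,\si_i(t_i)^{-1}$; once this is in hand, the rest of the argument is purely formal manipulation of the $\Z^n$-grading.
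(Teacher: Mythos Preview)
Your proof is correct and follows essentially the same line as the paper's: both show that each $\hat X_i$ is invertible (hence every homogeneous component contains a unit), and deduce that any graded ideal with trivial degree-zero part must vanish, so $\TGWI{R}{\si}{t}{\mu}=0$. The only difference is cosmetic: the paper packages the last step as the general fact that in a crossed product (or strongly graded) ring every nonzero graded ideal meets the degree-zero component, whereas you unwind that argument explicitly via $a\,m_g^{-1}\in J\cap \TGWC{R}{\si}{t}{\mu}_0$.
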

\begin{proof}
The algebra $\TGWC{R}{\si}{t}{\mu}$
is a $\Z^n$-crossed product algebra over its degree zero subalgebra,
since each homogeneous component 
contains an invertible element. Indeed
since $t_i\in R^\times$, each $X_i$ is invertible and thus
$X_1^{g_1}\cdots X_n^{g_n}$ has
degree $g$ and is invertible. Therefore any nonzero graded ideal in
$\TGWC{R}{\si}{t}{\mu}$ has nonzero intersection with the degree zero component, a
property which holds for any strongly graded ring, in particular
for crossed product algebras. Thus $\TGWI{R}{\si}{t}{\mu}=0$, which
proves the claim.
\end{proof}

We give an example of a TGW algebra which is the trivial ring. This shows the need for finding sufficient conditions on the TGW datum which will ensure the algebra is nontrivial. Finding such conditions is the goal of this paper.
\begin{example}\label{ex:trivialtgwa_1}
In \cite[Equation (2)]{MT02} it was observed that the relation
\begin{equation}\label{eq:ex:trivial}
\si_i\si_j(t_j)X_jX_i=\mu_{ij}\si_j(t_j)X_iX_j
\end{equation}
is satisfied in any TGW algebra $A=\TGWA{R}{\si}{t}{\mu}$ for every $i\neq j$.
There are two ways to commute a multiple of $X_kX_jX_i$ to a multiple of $X_iX_jX_k$
(starting by commuting $X_kX_j$ or $X_jX_i$ respectively),
using relation \eqref{eq:ex:trivial}. Namely,
\begin{align}
\label{eq:step2del1ex}
\si_j\si_k(t_k) \si_i\si_j\si_k(t_k) \si_i\si_j(t_j) X_kX_jX_i &=
\mu_{jk}\si_k(t_k) \mu_{ik}\si_j\si_k(t_k) \mu_{ij}\si_j(t_j) X_iX_jX_k
\intertext{and}
\label{eq:step2del2ex}
\si_i\si_j\si_k(t_j) \si_i\si_k(t_k) \si_i\si_j\si_k(t_k) X_kX_jX_i &=
\mu_{ij}\si_k\si_j(t_j) \mu_{ik}\si_k(t_k) \mu_{jk}\si_i\si_k(t_k) X_iX_jX_k.
\end{align}
 Combining \eqref{eq:step2del1ex} and \eqref{eq:step2del2ex} we get
\begin{multline}\label{eq:step2del3ex}
\big(\si_j\si_k(t_k) \si_i\si_j\si_k(t_k) \si_i\si_j(t_j) 
\mu_{ij}\si_k\si_j(t_j) \mu_{ik}\si_k(t_k) \mu_{jk}\si_i\si_k(t_k)
\\
-\mu_{jk}\si_k(t_k) \mu_{ik}\si_j\si_k(t_k) \mu_{ij}\si_j(t_j) 
\si_i\si_j\si_k(t_j) \si_i\si_k(t_k) \si_i\si_j\si_k(t_k) 
\big)X_iX_jX_k=0.
\end{multline}
Assume that $t_i$ is invertible in $R$ for every $i$. Thus, since $t_i=Y_iX_i$ in $A$, $X_i$ is invertible in $A$ for every $i$. Then \eqref{eq:step2del3ex} implies that, in the algebra $A$,
\begin{equation}\label{eq:asdf}
 \si_i\si_j(t_j) \si_k\si_j(t_j) - \si_j(t_j) \si_i\si_j\si_k(t_j) =0.
\end{equation}
(Note that we cannot conclude that \eqref{eq:asdf} must also hold in $R$, unless we know that the canonical map $\rho:R\to A$ is injective.)
The following is an example of a TGW algebra where 
\eqref{eq:tgwa_consistency1} holds but the left hand side of \eqref{eq:asdf} in fact is an invertible element of $A$, which forces $A$ to be the trivial ring.
Namely, let
\begin{itemize}
\item $n=3$, $\K=\C$, $\mu_{ij}=1$ for all $i\neq j$;
\item $R=\C[\al_{12}^{\pm 1},\al_{23}^{\pm 1},\al_{13}^{\pm 1},t_1^{\pm 1},t_2^{\pm 1},t_3^{\pm 1}]$, a Laurent polynomial algebra in $6$ variables;
\item $\si_1,\si_2,\si_3\in\Aut_\C(R)$, given by
\begin{equation}
\si_i(\al_{jk}) \colonequals 
  \begin{cases}
     \al_{jk}&\text{if $\{i,j,k\}\neq \{1,2,3\}$,}\\
    -\al_{jk}&\text{if $\{i,j,k\}=\{1,2,3\}$,}
  \end{cases}
\end{equation}
 for all $i,j,k\in\{1,2,3\}$ with $j<k$, and
\begin{equation}
  \si_i(t_j) \colonequals \al_{ij} t_j,
%
\end{equation}
for all $i,j\in\{1,2,3\}$, where
$\al_{ji}\colonequals\al_{ij}^{-1}$ for $i<j$ and $\al_{ii}\colonequals 1$ for $i=1,2,3$.
\end{itemize}
Then the automorphisms $\si_1,\si_2,\si_3$ commute with each other. Indeed, it is immediate that $\si_i\si_j(\al_{kl})=\si_j\si_i(\al_{kl})$ holds for all $i,j,k,l\in\{1,2,3\}$.
On the $t_i$ we have
\[\si_i\si_j(t_k)=\si_i(\al_{jk}t_k) =
\begin{cases}
-\al_{jk}\al_{ik} t_k , &\text{if $\{i,j,k\}=\{1,2,3\}$},\\
\al_{jk}\al_{ik} t_k, & \text{otherwise}.
\end{cases}
\]
Since the right hand side is symmetric in $i,j$ it follows that $\si_i$ and $\si_j$ commute on any $t_k$. 
Relation \eqref{eq:tgwa_consistency1} is easily checked.
However, for any $i,j,k$ such that $\{i,j,k\}=\{1,2,3\}$, the result of $\si_j^{-1}$ applied to the left hand side of \eqref{eq:asdf} equals
\begin{equation}\label{eq:trivialtgwa_invertible}
\si_i(t_j)\si_k(t_j)   -  \si_i\si_k(t_j)t_j
=\al_{ij}\al_{kj}t_j^2 - \si_i(\al_{kj}t_j)t_j 
= 2\al_{ij}\al_{kj}t_j^2
\end{equation}
which is invertible in $R$, hence the image is invertible in $A$. Thus \eqref{eq:asdf} implies that $A$ is the trivial ring $\{0\}$ despite the fact that \eqref{eq:tgwa_consistency1} holds.

Put in different terms, the situation for the TGW datum above is that the ideal in the free $R$-ring $F=F_R\big(\{x_i,y_i\mid i=1,\ldots,n\}\big)$ generated by the elements \eqref{eq:tgwarels} contains an invertible element of the algebra $R$ and consequently the quotient of $F$ by that ideal becomes trivial.

%

\end{example}

%
%


\section{Functoriality of the construction} \label{sec:functoriality}
We prove in this section that the construction of twisted generalized Weyl algebras is functorial in the initial data in a certain sense. This will be applied in the next two sections.


For a group $G$ we let
$\AlgG{\K}{G}$ denote the category of $G$-graded $\K$-algebras
where morphisms $\varphi:A\to B$ are graded $\K$-algebra homomorphisms:
$\varphi(A_g)\subseteq B_g$ for all $g\in G$.
We have the forgetful functor $\Ff:\TGW{n}{\K}\to\AlgG{\K}{\Z^n}$,
defined by $\Ff(R,\si,t)=R$ with trivial grading $R_0=R$.

\begin{thm}\label{thm:functoriality}
Let $n\in\Z_{>0}$ and $\mu$ be a parameter matrix over $\K^\times$ of size $n$.
\begin{enumerate}[{\rm (a)}]
\item \label{it:functoriality_a}
The construction of a twisted generalized Weyl algebra $\TGWA{R}{\si}{t}{\mu}$ from a TGW datum $(R,\si,t)$ defines a functor $\Af{\mu}:\TGW{n}{\K}\to \AlgG{\K}{\Z^n}$. That is, for any morphism $\varphi:(R,\si,t)\to (R',\si',t')$ in $\TGW{n}{\K}$ there is a morphism $\Af{\mu}(\varphi):\TGWA{R}{\si}{t}{\mu}\to\TGWA{R'}{\si'}{t'}{\mu}$ in $\AlgG{\K}{\Z^n}$ such that $\Af{\mu}$ preserves compositions and identity morphisms.
\item \label{it:functoriality_b}
 The family $\rho_\mu=\{\rho_{\mu,(R,\si,t)}\}$, where $(R,\si,t)$ ranges over the objects in $\TGW{n}{\K}$,
of canonical maps
 $\rho_{\mu,(R,\si,t)}:R\to\TGWA{R}{\si}{t}{\mu}$ defines a 
natural transformation
from the forgetful functor $\Ff$ to the functor $\Af{\mu}$ from part
(\ref{it:functoriality_a}).
That is,
given any morphism $\varphi:(R,\si,t)\to (R',\si',t')$ in $\TGW{n}{\K}$,
we have the following commutative diagram in $\AlgG{\K}{\Z^n}$:
\begin{equation}\label{eq:naturality}
\xymatrix@C=1.5cm{ 
 R \ar[r]^{\varphi} \ar[d]^{\rho_{\mu,(R,\si,t)}}
  & R'\ar[d]^{\rho_{\mu,(R',\si',t')}} \\
 \TGWA{R}{\si}{t}{\mu} \ar[r]^{\Af{\mu}(\varphi)} & \TGWA{R'}{\si'}{t'}{\mu}
}
\end{equation}
\item \label{it:functoriality_c}
For any morphism $\varphi:(R,\si,t)\to (R',\si',t')$ in $\TGW{n}{\K}$, the
algebra
$\TGWA{R'}{\si'}{t'}{\mu}$ is generated as a left and as a right $R'$-module
by the image of $\Af{\mu}(\varphi)$.
\end{enumerate}
\end{thm}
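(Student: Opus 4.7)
My strategy for (a) is to lift $\varphi$ to a morphism of TGW constructions and then descend to the TGW algebras. First, viewing $\TGWC{R'}{\si'}{t'}{\mu}$ as an $R$-ring via $\varphi$ composed with its canonical $R'$-structure map, the universal property of the free $R$-ring on $\{x_i,y_i\mid i=1,\ldots,n\}$ yields a unique $\K$-algebra morphism $F_R(\{x_i,y_i\})\to\TGWC{R'}{\si'}{t'}{\mu}$ that extends $\varphi$ on $R$ and sends $x_i\mapsto \hat X_i'$, $y_i\mapsto\hat Y_i'$. The conditions $\varphi\si_i=\si_i'\varphi$ and $\varphi(t_i)=t_i'$ guarantee that the defining relations \eqref{eq:tgwarels1}--\eqref{eq:tgwarels3} of the source are carried to the analogous relations of the target (the parameter matrix $\mu$ being unchanged), so the map descends to a graded $\K$-algebra morphism $\tilde\varphi:\TGWC{R}{\si}{t}{\mu}\to\TGWC{R'}{\si'}{t'}{\mu}$ characterized by $\tilde\varphi(\hat X_i)=\hat X_i'$ and $\tilde\varphi(\hat Y_i)=\hat Y_i'$.

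The main obstacle, and the heart of (a), is to show that the composite $\Phi:=\pi'\circ\tilde\varphi:\TGWC{R}{\si}{t}{\mu}\to\TGWA{R'}{\si'}{t'}{\mu}$ annihilates the ideal $\TGWI{R}{\si}{t}{\mu}$. My plan is, for every graded ideal $J\subseteq\TGWC{R}{\si}{t}{\mu}$ with $J\cap\TGWC{R}{\si}{t}{\mu}_0=0$, to show that the two-sided ideal in $\TGWA{R'}{\si'}{t'}{\mu}$ generated by $\Phi(J)$ has zero intersection with the degree zero component and therefore vanishes by Lemma~\ref{lem:easy}(\ref{it:zerointersection}). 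That $\Phi(J)$ itself misses degree zero is immediate from gradedness of $\Phi$ and the hypothesis $J_0=0$; the delicate point is to rule out that multiplying an element of $\Phi(J)$ of degree $g\neq 0$ on the left and right by elements whose degrees sum to $-g$ can produce something nonzero in the degree zero component. I expect to handle this by exploiting the explicit form of the consistency-type relations (such as \eqref{eq:step2del3ex}) that generate $\TGWI$, whose images under $\tilde\varphi$ are manifestly of the same type and thus already lie in $\TGWI{R'}{\si'}{t'}{\mu}$. Summing over all admissible $J$ will then give $\Phi(\TGWI{R}{\si}{t}{\mu})=0$, hence the desired map $\Af{\mu}(\varphi)$; preservation of compositions and identities follows by uniqueness, since the prescribed maps agree on the generating set $R\cup\{\hat X_i,\hat Y_i\}$.

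For (b), the square \eqref{eq:naturality} commutes by construction: both compositions send $r\in R$ to the class of $\varphi(r)$ inside $\TGWA{R'}{\si'}{t'}{\mu}_0$. Part (c) is then a direct consequence of Lemma~\ref{lem:easy}(\ref{it:monomialgeneration}): the image of $\Af{\mu}(\varphi)$ contains $X_i'=\Af{\mu}(\varphi)(X_i)$ and $Y_i'=\Af{\mu}(\varphi)(Y_i)$, hence all monic monomials in the $X_i',Y_i'$, and these monomials generate $\TGWA{R'}{\si'}{t'}{\mu}$ as a left and as a right $R'$-module.
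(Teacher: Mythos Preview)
Your construction of $\tilde\varphi$ and your treatment of parts (b) and (c) are correct and match the paper. The gap is in the heart of (a): your proposed method for the ``delicate point'' does not work as stated.

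You write that you will handle the passage from $\Phi(J)$ to the ideal it generates ``by exploiting the explicit form of the consistency-type relations \ldots\ that generate $\TGWI{R}{\si}{t}{\mu}$''. But $\TGWI{R}{\si}{t}{\mu}$ is \emph{not} defined by an explicit list of generators; it is defined abstractly as the sum of all graded ideals with zero degree-zero part. Relations such as \eqref{eq:step2del3ex} are particular elements one can produce, but there is no claim (and no reason to expect) that they generate $\TGWI{R}{\si}{t}{\mu}$ in general. So an argument based on checking that ``generators go to generators'' cannot get off the ground.

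The paper closes exactly this gap by a purely abstract pull-back argument, and it is worth seeing why it succeeds where the explicit-generator idea fails. Take a homogeneous $a\in\TGWI{R}{\si}{t}{\mu}$ and homogeneous $b_j',c_j'\in\TGWC{R'}{\si'}{t'}{\mu}$ with $\deg(b_j'\,\tilde\varphi(a)\,c_j')=0$. By Lemma~\ref{lem:easy}(\ref{it:monomialgeneration}) one may assume each $b_j',c_j'$ is an $R'$-multiple of a monic monomial; since $\tilde\varphi(\hat X_i)=\hat X_i'$ and $\tilde\varphi(\hat Y_i)=\hat Y_i'$, every monic monomial in the target lies in the image of $\tilde\varphi$. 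Thus $b_j'=r_j'\tilde\varphi(b_j)$ and $c_j'=s_j'\tilde\varphi(c_j)$ with $r_j',s_j'\in R'$ and $b_j,c_j$ monic monomials in the source. Using relation \eqref{eq:tgwarels1} to move $s_j'$ past $\tilde\varphi(b_ja)$ gives
\[
\sum_j b_j'\,\tilde\varphi(a)\,c_j'
=\sum_j r_j'\,\si'_{\deg(b_j a)}(s_j')\,\tilde\varphi(b_j a c_j),
\]
and each $b_j a c_j$ lies in $\TGWI{R}{\si}{t}{\mu}$ and has degree $0$, hence vanishes. No knowledge of what $\TGWI{R}{\si}{t}{\mu}$ looks like is needed; the whole point is that the monic monomials of the target are already in the image of $\tilde\varphi$, so the computation can be pulled back to the source. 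Your outline correctly isolates the difficulty but substitutes the wrong tool for it.
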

\begin{proof}
Let $\varphi:(R,\si,t)\to (R',\si',t')$
be a morphism of TGW data. 
Pulling back the canonical map $R'\to\TGWC{R'}{\si'}{t'}{\mu}$
through $\varphi$
we get a map $R\to\TGWC{R'}{\si'}{t'}{\mu}$. The universal property
of free $R$-rings implies that this map extends uniquely to a
morphism of $R$-rings from the
free $R$-ring on the set $\{x_i,y_i\mid i=1,\ldots,n\}$ by requiring
that $x_i\mapsto \hat X_i'$, $y_i\mapsto \hat Y_i'$ for all $i$, where
 $\hat X_i', \hat Y_i'$ denote the generators in $\TGWC{R'}{\si'}{t'}{\mu}$.
Since $\varphi$ is a morphism of TGW data, one verifies that
the ideal with generators \eqref{eq:tgwarels} lies in the kernel, thus
inducing a map $\hat\varphi$ such that the following diagram commutes:
\[
\xymatrix@C=1.5cm{ 
 R \ar[r]^{\varphi} \ar[d]
  & R'\ar[d] \\
 \TGWC{R}{\si}{t}{\mu} \ar[r]^{\hat\varphi} & \TGWC{R'}{\si'}{t'}{\mu}
}
\]
Let $\mathcal{I}=\TGWI{R}{\si}{t}{\mu}$ (respectively
$\mathcal{I}'=\TGWI{R'}{\si'}{t'}{\mu}$) denote the 
sum of all graded ideals in
$\TGWC{R}{\si}{t}{\mu}$ (respectively $\TGWC{R'}{\si'}{t'}{\mu}$)
having zero intersection with the degree zero component.
We need to verify that $\hat\varphi(\mathcal{I})\subseteq\mathcal{I}'$.
Let $a\in \mathcal{I}$. Since $\mathcal{I}$ is graded we can assume that $a$
is homogeneous. Let $d\in\Z^n$ be its degree. To prove that $\hat\varphi(a)\in\mathcal{I}'$ we show that the two-sided ideal generated by $\hat\varphi(a)$
has zero intersection with the zero component. Then, since $\mathcal{I}'$ is the sum of all such ideals, it must contain $\hat\varphi(a)$.
 Let $m>0$ and $b_j',c_j'\; (j=1,\ldots,m)$
be nonzero homogeneous elements of $\TGWC{R'}{\si'}{t'}{\mu}$
such that $d+\deg b_j'+\deg c_j' = 0$ for all $j$.
We must show that $\sum_{j} b_j'\varphi(a)c_j' = 0$.
Without loss of generality, the elements $b_j$ and $c_j$ are monomials.
Since $\hat\varphi(\hat X_i)=\hat X_i'$ and $\hat\varphi(\hat Y_i)=\hat Y_i'$
 it follows that
we have $b_j'=r_j'\hat\varphi(b_j)$, $c_j'=s_j'\hat\varphi(c_j)$ where $r_j',s_j'\in R'$ and $b_j,c_j\in\TGWC{R}{\si}{t}{\mu}$. Thus
\[\sum_j b_j'\hat\varphi(a)c_j' =
 \sum_j r_j'\hat\varphi(b_j)\hat\varphi(a)s_j'\hat\varphi(c_j) =
 \sum_j r_j'\tau_j(s_j') \varphi(b_jac_j) 
\]
where $\tau_j=\si_{d+\deg b_j}$. But $a\in\mathcal{I}$ so $b_jac_j\in\mathcal{I}$ and hence,
since $\deg(b_jac_j)=0$, it follows that $b_jac_j=0$ for each $j$. This proves that $\hat\varphi(a)$ generates a two-sided ideal having zero intersection with the degree zero component, and thus $\hat\varphi(a)\in\mathcal{I}'$.
Hence $\hat\varphi(\mathcal{I})\subseteq\mathcal{I'}$ and therefore $\hat\varphi$ induces a $\K$-algebra morphism $\Af{\mu}(\varphi):\TGWA{R}{\si}{t}{\mu}\to \TGWA{R'}{\si'}{t'}{\mu}$ such that the diagram \eqref{eq:naturality} commutes.
That $\Af{\mu}$ defines a functor is easy to check. This proves
part (a). Claim (b) follows from the construction of $\Af{\mu}(\varphi)$.

(c) By construction, the image of $\Af{\mu}(\varphi)$ contains all the elements
$X_i',Y_i'\;(i=1,\ldots,n)$, hence all monic monomials, which generate
 $\TGWA{R'}{\si'}{t'}{\mu}$ as a left and as a right $R'$-module, by Lemma \ref{lem:easy}(\ref{it:monomialgeneration}).


\end{proof}

\begin{cor}\label{cor:surjectivity}
If $\varphi:(R,\si,t)\to (R',\si',t')$ is a surjective morphism in $\TGW{n}{\K}$,
then $\Af{\mu}(\varphi):\TGWA{R}{\si}{t}{\mu}\to\TGWA{R'}{\si'}{t'}{\mu}$ is surjective
for any $\mu\in\PM{n}{\K}$.
\end{cor}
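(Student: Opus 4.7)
The plan is to deduce the corollary directly from parts (b) and (c) of Theorem~\ref{thm:functoriality}. First, by Theorem~\ref{thm:functoriality}(c), every element $a'\in\TGWA{R'}{\si'}{t'}{\mu}$ can be written as a finite sum $a'=\sum_i \rho_{\mu,(R',\si',t')}(r_i')\,\Af{\mu}(\varphi)(b_i)$ for some $r_i'\in R'$ and $b_i\in\TGWA{R}{\si}{t}{\mu}$, since the image of $\Af{\mu}(\varphi)$ generates $\TGWA{R'}{\si'}{t'}{\mu}$ as a left $R'$-module.

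Next, using the surjectivity of $\varphi$, each $r_i'\in R'$ can be lifted to some $r_i\in R$ with $\varphi(r_i)=r_i'$. By the naturality square \eqref{eq:naturality} from Theorem~\ref{thm:functoriality}(b), we then have
\[
\rho_{\mu,(R',\si',t')}(r_i') = \rho_{\mu,(R',\si',t')}(\varphi(r_i)) = \Af{\mu}(\varphi)\bigl(\rho_{\mu,(R,\si,t)}(r_i)\bigr).
\]
Therefore each summand is a product of two elements in $\im\Af{\mu}(\varphi)$, namely
\[
\rho_{\mu,(R',\si',t')}(r_i')\,\Af{\mu}(\varphi)(b_i) = \Af{\mu}(\varphi)\bigl(\rho_{\mu,(R,\si,t)}(r_i)\,b_i\bigr),
\]
since $\Af{\mu}(\varphi)$ is a ring morphism and hence has image a subring of $\TGWA{R'}{\si'}{t'}{\mu}$.

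Summing, $a'\in\im\Af{\mu}(\varphi)$, which establishes surjectivity. There is no real obstacle here; the corollary is essentially bookkeeping, collecting the already-proven fact that the image is a generating set (c) with the naturality identity (b) to upgrade ``generating set'' to ``everything'' once the coefficient ring surjects.
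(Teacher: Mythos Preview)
Your proof is correct and is exactly the argument the paper has in mind; the paper's own proof is the single sentence ``Follows from Theorem~\ref{thm:functoriality}(b) and (c)'', and you have simply spelled out those two ingredients in detail.
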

\begin{proof}
Follows from Theorem \ref{thm:functoriality}(b) and (c).
\end{proof}

\begin{lem}\label{lem:kernel}
Let $\varphi:(R,\si,t)\to (R',\si',t')$ be a surjective morphism in $\TGW{n}{\K}$.
For $\mu\in\PM{n}{\K}$
put $A=\TGWA{R}{\si}{t}{\mu}$ and let $K\subseteq A_0$ be the kernel of the restriction of $\Af{\mu}(\varphi)$ to $A_0$. Then $\ker\big(\Af{\mu}(\varphi)\big)$ equals the sum of all graded ideals $J$ in $A$ such that $J\cap A_0\subseteq K$.
\end{lem}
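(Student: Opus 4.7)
The plan is to prove the equality by two containments, denoting $\Phi = \Af{\mu}(\varphi)$ and $A' = \TGWA{R'}{\si'}{t'}{\mu}$. Two features of $\Phi$ drive the argument: by Theorem \ref{thm:functoriality}(\ref{it:functoriality_a}) it is a morphism in $\AlgG{\K}{\Z^n}$, hence in particular grade-preserving, and by Corollary \ref{cor:surjectivity} it is surjective.

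For the easy inclusion $\ker(\Phi) \subseteq \sum_J J$, I would observe that $\ker(\Phi)$ is itself a graded ideal of $A$ (as the kernel of a graded morphism), and that by the very definition of $K$ one has $\ker(\Phi) \cap A_0 = \ker(\Phi|_{A_0}) = K$. Hence $\ker(\Phi)$ is itself a member of the family of ideals being summed over, so it is contained in the sum.

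For the reverse inclusion, let $J$ be any graded ideal of $A$ satisfying $J \cap A_0 \subseteq K$; I claim $\Phi(J) = 0$. Surjectivity of $\Phi$ guarantees that $\Phi(J)$ is an honest (two-sided) ideal of $A'$, not merely of the subring $\Phi(A)$. Because $\Phi$ is graded and $J = \bigoplus_d J_d$ with $J_d = J \cap A_d$, this ideal $\Phi(J)$ is graded in $A'$, and its degree zero component equals $\Phi(J_0) = \Phi(J \cap A_0) \subseteq \Phi(K) = 0$. I would then invoke Lemma \ref{lem:easy}(\ref{it:zerointersection}) \emph{applied to the TGW algebra $A'$}, which asserts that any nonzero graded ideal of $A'$ meets $A'_0$ nontrivially. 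Contrapositively, $\Phi(J) = 0$, so $J \subseteq \ker(\Phi)$, and summing over all admissible $J$ yields the containment $\sum_J J \subseteq \ker(\Phi)$.

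The two substantive inputs are therefore (i) surjectivity of $\Phi$ from Corollary \ref{cor:surjectivity}, used so that $\Phi(J)$ is an ideal of the whole of $A'$, and (ii) the zero-intersection property of Lemma \ref{lem:easy}(\ref{it:zerointersection}) for the target TGW algebra $A'$, which propagates the vanishing from the degree zero component to the entire ideal $\Phi(J)$. With these two facts in place the argument reduces to elementary bookkeeping with the $\Z^n$-grading, and I do not anticipate any genuine obstacle.
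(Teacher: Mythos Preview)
Your proof is correct and follows essentially the same approach as the paper: both directions use surjectivity of $\Phi$ (Corollary \ref{cor:surjectivity}) together with Lemma \ref{lem:easy}(\ref{it:zerointersection}) applied to the target algebra $A'$ to kill $\Phi(J)$ once its degree-zero part vanishes. The only cosmetic difference is that for the inclusion $\ker(\Phi)\subseteq\sum_J J$ the paper argues via $AaA$ for homogeneous $a\in\ker(\Phi)$, whereas you more directly observe that $\ker(\Phi)$ is itself one of the admissible ideals $J$; both are equally valid.
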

\begin{proof}
Put $\bar A=\TGWA{R'}{\si'}{t'}{\mu}$.
Let $I(K)$ denote the sum of all graded ideals in $A$ whose intersection with $A_0$ is
contained in $K$. For brevity we put $\Phi=\Af{\mu}(\varphi)$. Since $\Phi$ is surjective by Corollay \ref{cor:surjectivity}, the image $\Phi\big(I(K)\big)$ is a graded ideal of $\bar A$ whose
intersection with $\bar A_0$ is zero. Lemma \ref{lem:easy}(\ref{it:zerointersection}) applied
to $\bar A$ gives $\Phi\big(I(K)\big)=\{0\}$. Thus $I(K)\subseteq\ker\Phi$.
Conversely, suppose $a\in\ker(\Phi)$. We can assume $a$ is homogeneous. Clearly $(AaA)\cap A_0\subseteq K$ which means that $AaA\subseteq I(K)$. So $\ker(\Phi)\subseteq I(K)$.
\end{proof}

\begin{cor}\label{cor:injectivity}
Let $\mu\in\PM{n}{\K}$.
If $\varphi:(R,\si,t)\to (R',\si',t')$ is a morphism in $\TGW{n}{\K}$
which is injective as a $\K$-algebra morphism, and
 if $(R',\si',t')$ is $\mu$-consistent,
 then $(R,\si,t)$ is $\mu$-consistent and $\Af{\mu}(\varphi)$ is injective.
\end{cor}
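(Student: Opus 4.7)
The plan is to exploit the commutative square \eqref{eq:naturality} supplied by Theorem~\ref{thm:functoriality}(b), together with the graded nature of the induced morphism and Lemma~\ref{lem:easy}(\ref{it:zerointersection}). Write $\rho:R\to A$ and $\rho':R'\to A'$ for the canonical maps, where $A=\TGWA{R}{\si}{t}{\mu}$ and $A'=\TGWA{R'}{\si'}{t'}{\mu}$, and set $\Phi=\Af{\mu}(\varphi)$, so that $\rho'\circ\varphi=\Phi\circ\rho$.

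For $\mu$-consistency of $(R,\si,t)$, the argument is purely diagrammatic: since $\rho'$ is injective by hypothesis and $\varphi$ is injective by hypothesis, the composition $\rho'\circ\varphi=\Phi\circ\rho$ is injective, and therefore $\rho$ is injective. In particular the restriction of $\Phi$ to $A_0=\rho(R)$ (using Lemma~\ref{lem:easy}(\ref{it:A0})) is injective, because it is identified via $\rho$ with $\rho'\circ\varphi$.

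For the injectivity of $\Phi$, the key observation is that $\Phi$ is a morphism in $\AlgG{\K}{\Z^n}$ by Theorem~\ref{thm:functoriality}(\ref{it:functoriality_a}), so its kernel $\ker(\Phi)$ is a $\Z^n$-graded ideal of $A$. Suppose for contradiction that $\ker(\Phi)\neq 0$. By Lemma~\ref{lem:easy}(\ref{it:zerointersection}), any nonzero graded ideal of $A$ meets the degree-zero component nontrivially, so $\ker(\Phi)\cap A_0\neq 0$. But the previous paragraph shows that $\Phi|_{A_0}$ is injective, giving a contradiction. Hence $\Phi$ is injective.

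There is no real obstacle here: the statement reduces, via functoriality, to the interaction between grading and the maximality property built into the definition of $\TGWA{R}{\si}{t}{\mu}$. The only thing to be slightly careful about is that $\varphi$ is not assumed surjective, so Corollary~\ref{cor:surjectivity} and Lemma~\ref{lem:kernel} are not directly available; fortunately we do not need them, because the graded-ideal argument using Lemma~\ref{lem:easy}(\ref{it:zerointersection}) works without any surjectivity hypothesis.
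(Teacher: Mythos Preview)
Your proof is correct and follows essentially the same route as the paper: use naturality \eqref{eq:naturality} to get injectivity of $\rho$ and of $\Phi|_{A_0}$, then conclude injectivity of $\Phi$ from the fact that nonzero graded ideals of $A$ meet $A_0$. The paper phrases the last step as an application of Lemma~\ref{lem:kernel} with $K=\{0\}$, whereas you invoke Lemma~\ref{lem:easy}(\ref{it:zerointersection}) directly; as you correctly observe, Lemma~\ref{lem:kernel} is stated only for surjective $\varphi$, so your unpacking is arguably cleaner (the paper's citation is harmless only because the inclusion $\ker\Phi\subseteq I(K)$ in the proof of Lemma~\ref{lem:kernel} does not actually use surjectivity).
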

\begin{proof}
That $(R,\si,t)$ is $\mu$-consistent is immediate by 
the commutativity of the diagram $\eqref{eq:naturality}$.
Put $A=\TGWA{R}{\si}{t}{\mu}$ and $K=\ker\big(\Af{\mu}(\varphi)|_{A_0}\big)$.
Then by Lemma \ref{lem:easy}(\ref{it:A0}) and commutativity of \eqref{eq:naturality},
$K=\{0\}$.
Thus by Lemma \ref{lem:kernel}, $\Af{\mu}(\varphi)$ is injective.
\end{proof}

\section{Quotients of TGW algebras} \label{sec:tgwaquotients}

The following result generalizes \cite[Proposition 2.12]{BO} from generalized Weyl algebras
to TGW algebras.
If $(R,\si,t)\in\TGW{n}{\K}$, the group $\Z^n$ acts on $R$ via $\si$. An ideal $J$ in $R$
is called \emph{$\Z^n$-invariant} if $\si_g(r)\in J\,\forall r\in J, g\in\Z^n$.
\begin{thm}\label{thm:tgwaquotients}
Let $A=\TGWA{R}{\si}{t}{\mu}$ be a twisted generalized Weyl algebra over $\K$
of degree $n$, and $J$ a $\Z^n$-invariant ideal of $R$.
Let $\bar A=\TGWA{R/J}{\bar\si}{\bar t}{\mu}$, where
$\bar\si_g(r+J)=\si_g(r)+J$ for all $g\in\Z^n, r\in R$
and $\bar t_i=t_i+J$ for all $i$.
Let $\rho:R\to A$ and $\bar\rho:R/J\to \bar A$ be the natural maps.
Put $K=\rho\big(\ker(\bar\rho\circ\pi_J)\big)$, where
 $\pi_J:R\to R/J$ is the canonical projection.
Suppose that
\begin{enumerate}[{\rm (i)}]
\item $\forall g\in\Z^n$ there exists a monic monomial $u_g\in A_g$ such that $A_g=A_0u_g$,
\item $\rho(\si_g(t_i))+K$ is a regular element in $A_0/K$ for all
 $i\in\{1,\ldots,n\},\, g\in\Z^n$. 
\end{enumerate}
Let $\langle K\rangle=AKA$ be the ideal in $\TGWA{R}{\si}{t}{\mu}$ generated by $K$.
Then $AK=\langle K\rangle =KA$, and there is a graded isomorphism
\begin{equation}
\TGWA{R}{\si}{t}{\mu}/\langle K\rangle \simeq \TGWA{R/J}{\bar\si}{\bar t}{\mu}.
\end{equation}
\end{thm}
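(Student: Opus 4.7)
The plan is to apply the functoriality results of Section \ref{sec:functoriality} to the canonical projection $\pi_J\colon R \to R/J$. Since $J$ is $\Z^n$-invariant and $\pi_J(t_i) = \bar t_i$ for all $i$, the map $\pi_J$ is a morphism in $\TGW{n}{\K}$, so by Theorem \ref{thm:functoriality} and Corollary \ref{cor:surjectivity} it induces a surjective graded $\K$-algebra morphism $\Phi \colonequals \Af{\mu}(\pi_J)\colon A \to \bar A$. Commutativity of the naturality square \eqref{eq:naturality} together with Lemma \ref{lem:easy}(\ref{it:A0}) identifies $\ker(\Phi|_{A_0})$ with exactly $K$, and Lemma \ref{lem:kernel} then expresses $\ker\Phi$ as the sum $I(K)$ of all graded ideals $J' \subseteq A$ satisfying $J' \cap A_0 \subseteq K$. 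The inclusion $\langle K\rangle = AKA \subseteq \ker\Phi$ is automatic, so everything reduces to establishing the reverse inclusion together with the identity $AK = KA = AKA$; once this is in hand, the first isomorphism theorem applied to $\Phi$ yields the required graded isomorphism $A/\langle K\rangle \xrightarrow{\sim} \bar A$.

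The core of the argument is to show that any homogeneous $j \in J' \cap A_g$ already lies in $KA \cap AK$. Hypothesis (i) lets me write $j = a u_g$ with $a \in A_0$, and my plan is to produce a monic monomial $v$ of degree $-g$ for which $u_g v = \rho(c)$, where $c = \prod_\ell \si_{g_\ell}(t_{i_\ell})$ is an explicit central product in $R$. Writing $u_g = Z_{i_1}\cdots Z_{i_k}$ with $Z_{i_\ell} \in \{X_{i_\ell}, Y_{i_\ell}\}$, I would take $v = Z_{i_k}^\ast\cdots Z_{i_1}^\ast$, with $X_i^\ast \colonequals Y_i$ and $Y_i^\ast \colonequals X_i$, and then collapse $u_g v$ from the inside out: each innermost pair $Z_{i_\ell} Z_{i_\ell}^\ast$ is replaced by $\rho(\si_{i_\ell}(t_{i_\ell}))$ or $\rho(t_{i_\ell})$ via \eqref{eq:tgwarels2}, and the resulting central scalar is swept past the next outer $Z$ using \eqref{eq:tgwarels1}. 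A short induction on $k$ realises $u_g v$ in the claimed form. Now $jv = a\rho(c) \in J' \cap A_0 \subseteq K$; by hypothesis (ii) each factor $\rho(\si_{g_\ell}(t_{i_\ell})) + K$ is regular in $A_0/K$, so their central product $\rho(c) + K$ is regular as well, forcing $a \in K$ and hence $j = au_g \in KA$. Since $A_g = u_g A_0$ also holds (the identity $u_g\rho(r) = \rho(\si_g(r))u_g$ yields both inclusions), the symmetric argument applied to $j = u_g a'$ with left multiplication by $v$ produces $j \in AK$.

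Combining these facts, $\ker\Phi = I(K) \subseteq KA \cap AK \subseteq AKA \subseteq \ker\Phi$, so all four ideals coincide with $\langle K\rangle$, and the proof is complete. I expect the main obstacle to be the technical bookkeeping in collapsing $u_g v$ to $\rho(c)$: one has to verify that every scalar created during the reduction really is of the form $\rho(\si_{g'}(t_i))$ covered by hypothesis (ii), since regularity modulo $K$ is assumed only for those particular elements and the concluding step breaks down otherwise. Everything else is a formal consequence of the functoriality results of Section \ref{sec:functoriality} combined with Lemma \ref{lem:kernel}.
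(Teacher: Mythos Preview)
Your proof is correct and follows essentially the same approach as the paper: both apply the functor $\Af{\mu}$ to $\pi_J$, invoke Lemma~\ref{lem:kernel}, and use the monomial collapse $u_g v = \rho(c)$ together with hypothesis~(ii) to force the coefficient into $K$. The only organizational difference is that the paper first proves a $\si$-invariance property of $K$ (via a computation in $\bar A$) to obtain $AK = \langle K\rangle = KA$ directly, whereas you deduce this equality from the chain $\ker\Phi \subseteq KA \cap AK \subseteq AKA \subseteq \ker\Phi$, running the collapse argument on both sides; this is a mild streamlining but not a genuinely different route.
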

\begin{proof}
The map $\pi_J$ intertwines the $\Z^n$-actions and maps $t_i$ to $\bar t_i$,
hence it is a
morphism $\pi_J:(R,\si,t)\to (R/J,\bar\si,\bar t)$ in $\TGW{n}{\K}$.
Applying the functor $\Af{\mu}$ from Theorem \ref{thm:functoriality} we get a $\K$-algebra morphism
$\Af{\mu}(\pi_J):\TGWA{R}{\si}{t}{\mu}\to\TGWA{R/J}{\bar\si}{\bar t}{\mu}$
which is surjective by Corollary \ref{cor:surjectivity}.
Thus it remains to prove that the kernel of $\Af{\mu}(\pi_J)$ equals $\langle K\rangle$.

Let $r\in \ker(\bar\rho\circ\pi_J)$.
We claim that $\rho(\si_g(r))\in K$ for all $g\in \Z$.
Let $u_g\in\bar A$ be any monic monomial of degree $g$
and $u_{-g}\in\bar A$ a monic monomial of degree $-g$.
Then $u_gu_{-g}$ can be written as $\bar\rho(a+J)$, where $a$ is  a product of elements of the form $\si_h(t_i)$ ($h\in\Z^n$, $i=1,\ldots,n$), possibly multiplied by an element of $\K^\times$. By assumption (ii), $\rho(a)+K$ is regular in $A_0/K$.
In $\bar A$, we have
$0=u_g\bar\rho(r+J)u_{-g}=\bar\rho(\si_g(r)+J)\bar\rho(a+J)=
\bar\rho(\si_g(r)a+J)$. So $\rho(\si_g(r))\rho(a)\in K$.
Since $\rho(a)+K$ is regular in $A_0/K$ by assumption (ii),
 we get $\rho(\si_g(r))\in K$.

%
%

Using this fact and Lemma \ref{lem:easy}(\ref{it:monomialgeneration}),
we get $AK=\langle K\rangle=KA$.
Since $\langle K\rangle\cap A_0=AK\cap A_0=\big(\sum_{g\in\Z^n} A_gK\big)\cap A_0=A_0K=K$,
Lemma \ref{lem:kernel} implies that
 $\langle K\rangle\subseteq\ker\big(\Af{\mu}(\pi_J)\big)$.
For the converse, let $g\in\Z^n$ and let $a\in A_g\cap\ker\big(\Af{\mu}(\pi_J)\big)$.
By assumption (i), $a=rZ_{i_1}\cdots Z_{i_m}$ for some
 $Z_{i_j}\in\{X_{i_j},Y_{i_j}\}$ and some $r\in A_0$. Put $X_i^\ast=Y_i$, $Y_i^\ast=X_i$.
Then $b:=aZ_{i_m}^\ast\cdots Z_{i_1}^\ast$ has degree zero, hence
 $b\in \ker\big(\Af{\mu}(\pi_j)|_{A_0}\big)$, which by Lemma \ref{lem:easy}(\ref{it:A0}) and
naturality \eqref{eq:naturality} equals $K$.
On the other hand, using relations \eqref{eq:tgwarels}
we get $b=r \cdot \xi u$, where $\xi\in\K^\times$ and $u$ is a product of elements
of the form $\rho(\si_h(t_i))$ ($h\in\Z^n$, $i=1,\ldots,n$).
Since by assumption (ii)
all $\rho(\si_h(t_i))+K$ are regular in $A_0/K$, we get $r\in K$ and thus $a\in KA$.
\end{proof}

\begin{rem}\label{rem:JKidentification}
If $(R,\si,t)$ and $(R/J,\bar \si,\bar t)$ are $\mu$-consistent,
we can identify $R$ and $R/J$ with their images in the corresponding TGW algebras.
Under such identifications, the ideal $K$ in Theorem \ref{thm:tgwaquotients} is
just equal to $J$. This follows from the commutativity of \eqref{eq:naturality}.

\end{rem}

\section{Localizations of TGW algebras}\label{sec:tgwalocalization}

The following trick was first observed in \cite{MPT} in the case of $R$ being
a commutative domain.
\begin{lem}\label{lem:regularshift}
Let $A=\TGWA{R}{\si}{t}{\mu}$ be a twisted generalized Weyl algebra
and $\rho:R\to A$ the natural map.
Suppose all $\rho(t_i)$ are regular in $A_0$. Let $g\in\Z^n$. Then 
\begin{equation}
ab=\si_g(ba)
\end{equation}
for any monic monomial $a\in A_g$ and any $b\in A_{-g}$.
\end{lem}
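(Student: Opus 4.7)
The plan is to interpret $\si_g(ba) \in A_0$ as $\rho(\si_g(r))$ for any lift $r \in R$ of $ba$ (such $r$ exists by Lemma~\ref{lem:easy}(\ref{it:A0}) since $A_0 = \rho(R)$); the lemma then asserts $ab = \rho(\si_g(r))$ for any such $r$, which in particular forces the right-hand side to be independent of the choice of $r$.

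First I would use the commutation relation for monic monomials. Since $a$ has degree $g$, iterating \eqref{eq:tgwarels1} gives $a\,\rho(r') = \rho(\si_g(r'))\,a$ for every $r' \in R$. Applied to $r$ this yields
\[(ab)\,a = a(ba) = a\,\rho(r) = \rho(\si_g(r))\,a,\]
so the element $c := ab - \rho(\si_g(r)) \in A_0$ satisfies $ca = 0$ in $A$. The remaining task is to conclude $c = 0$.

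To cancel $a$, I would introduce the reversed monic monomial $a^\ast \in A_{-g}$ obtained from $a$ by reading backwards with each $X_i$ swapped for $Y_i$. A short induction on the length of $a$, based on $Y_iX_i = \rho(t_i)$, $X_iY_i = \rho(\si_i(t_i))$, and the scalar commutations \eqref{eq:tgwarels3}, produces $aa^\ast = \xi\,\rho(\tau)$ for some $\xi \in \K^\times$ and some $\tau \in R$ that is a product of central elements of the form $\si_h(t_i)$ (centrality follows because each $t_i$ is central and each $\si_h$ is a $\K$-algebra automorphism). Right multiplication of $ca = 0$ by $a^\ast$ then gives $c\,\rho(\tau) = 0$ inside $A_0$.

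The main obstacle will be showing that $\rho(\tau)$ is regular in $A_0$, which reduces to establishing regularity of each factor $\rho(\si_h(t_i))$ starting from the bare hypothesis that $\rho(t_i)$ is regular. The starting point is the identity $Y_i\,\rho(s)\,X_i = \rho(\si_i^{-1}(s))\,\rho(t_i)$, which combined with the hypothesis immediately yields $\si_i^{-1}(\ker\rho) \subseteq \ker\rho$ for every $i$. From there I expect to bootstrap: given $\alpha\,\rho(\si_i(t_i)) = 0$ with $\alpha = \rho(\alpha')$, a cascade of right- and left-multiplications by the generators $X_i, Y_i$, combined with the established $\si_i^{-1}$-invariance and repeated applications of the regularity of $\rho(t_i)$, should force $\rho(\alpha') = 0$, proving $\rho(\si_i(t_i))$ regular; analogous arguments then propagate regularity to all the shifts $\rho(\si_h(t_i))$. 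Once $\rho(\tau)$ is regular in $A_0$, cancelling it in $c\,\rho(\tau) = 0$ gives $c = 0$, i.e.\ $ab = \rho(\si_g(r)) = \si_g(ba)$, as desired.
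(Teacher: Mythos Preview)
Your approach is sound but takes a longer route than the paper's. The paper argues by a direct induction on the length of the monic monomial $a$: it first establishes the one-step identities $Y_ic=\si_i^{-1}(cY_i)$ for $c\in A_{e_i}$ and $X_ic=\si_i(cX_i)$ for $c\in A_{-e_i}$, and then peels the factors of $a$ off one by one (so $ab=Z_1(Z_2\cdots Z_mb)=\si_{\deg Z_1}\big((Z_2\cdots Z_mb)Z_1\big)=\cdots=\si_g(ba)$). For the $Y_i$ case one simply right-multiplies by $Y_iX_i=\rho(t_i)$: since $Y_i(cY_i)X_i=\si_i^{-1}(cY_i)\,Y_iX_i=(Y_ic)\,Y_iX_i$, regularity of $\rho(t_i)$ gives the identity immediately. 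This bypasses your detour through $ca=0$, the reversed monomial $a^\ast$, and the product $aa^\ast=\rho(\tau)$.

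The weakest part of your sketch is Step~3, the regularity bootstrap. The manipulations you indicate yield only $\si_i^{-1}(\ker\rho)\subseteq\ker\rho$; passing from $\rho(\si_i^{-1}(\alpha'))=0$ back to $\rho(\alpha')=0$ requires the reverse inclusion $\si_i(\ker\rho)\subseteq\ker\rho$, which is exactly what you are trying to prove and does not follow from the hypothesis by the cascade you describe. That said, the paper's ``similarly'' for the $X_i$ case hides the very same point: the symmetric computation multiplies by $X_iY_i=\rho(\si_i(t_i))$ and needs \emph{that} element to be regular. In the sole application of the lemma (Theorem~\ref{thm:tgwalocalization}) the datum is $\mu$-consistent, so $\rho$ is injective, $A_0\cong R$, and every $\rho(\si_h(t_i))$ is regular; in that setting both arguments go through without difficulty.
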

\begin{proof}
Observe that if $Y_ic\in A_0$ then $Y_ic Y_iX_i=\si_i^{-1}(cY_i)Y_iX_i$, so
since $Y_iX_i=\rho(t_i)$ is regular in $A_0$ we get $Y_ic=\si_i^{-1}(cY_i)$.
Similarly $X_ic=\si_i(cX_i)$ if $X_ic\in A_0$. Applying this for each of the factors in
$a$ the claim follows.
\end{proof}

\begin{thm}\label{thm:tgwalocalization}
Let $A=\TGWA{R}{\si}{t}{\mu}$ be a twisted generalized Weyl algebra over $\K$
of degree $n$ such that $t_1,\ldots,t_n$ are regular in $R$.
Suppose $S\subseteq Z(R)$ is a subset such that
\begin{enumerate}[{\rm (i)}]
\item $S$ is multiplicative: $0\notin S$, $1\in S$ and $r,s\in S\Rightarrow rs\in S$,
\item all elements of $S$ are regular in $R$,
\item $\si_g(S)\subseteq S$ for all $g\in \Z^n$,
\item $(S^{-1}R,\tilde\si,\tilde t)$ is $\mu$-consistent,
\end{enumerate}
where
$\tilde\si_g\in\Aut_\K(S^{-1}R)$ is the unique extension of $\si_g$ for all $g\in \Z^n$,
 and $\tilde t_i=t_i/1$ for all $i=1,\ldots,n$.
By Corollay \ref{cor:injectivity}, $(R,\si,t)$ is also $\mu$-consistent
and we can identify $R$ with its image, $A_0$, under the natural map
 $\rho:R\to\TGWA{R}{\si}{t}{\mu}$.
Then the following statements hold.
\begin{enumerate}[{\rm (a)}]
\item The elements of $S$ are regular in $\TGWA{R}{\si}{t}{\mu}$,
\item $S$ is a left and right Ore set in $\TGWA{R}{\si}{t}{\mu}$,
\item we have an isomorphism
\begin{equation}
S^{-1}\TGWA{R}{\si}{t}{\mu} \simeq \TGWA{S^{-1}R}{\tilde\si}{\tilde t}{\mu}.
\end{equation}
\end{enumerate}
\end{thm}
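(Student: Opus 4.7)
The plan is to exploit the functoriality established in Section~\ref{sec:functoriality} by observing that the canonical localization map $\iota:R\hookrightarrow S^{-1}R$ underlies a morphism in $\TGW{n}{\K}$ from $(R,\si,t)$ to $(S^{-1}R,\tilde\si,\tilde t)$: it is a $\K$-algebra homomorphism that intertwines $\si_g$ with $\tilde\si_g$ (by the defining property of the unique extension) and sends $t_i$ to $\tilde t_i=t_i/1$, and it is injective because all elements of $S$ are regular in $R$. Since $(S^{-1}R,\tilde\si,\tilde t)$ is $\mu$-consistent by assumption~(iv), Corollary~\ref{cor:injectivity} gives that $(R,\si,t)$ is $\mu$-consistent and that the induced morphism $\Phi:=\Af{\mu}(\iota):A\to B$ is injective, where $B:=\TGWA{S^{-1}R}{\tilde\si}{\tilde t}{\mu}$. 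Identifying $S^{-1}R$ with $B_0$ via Lemma~\ref{lem:easy}(\ref{it:A0}), each element of $S$ becomes a unit in $B$; combined with the injectivity of $\Phi$ this proves~(a), since $sa=0$ or $as=0$ in $A$ forces $\Phi(a)=0$ and hence $a=0$.

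For (b) I would use the quasi-commutation relation $a_g\cdot r=\si_g(r)\cdot a_g$, valid for any $a_g\in A_g$ and $r\in Z(R)$; it holds on monic monomials by iterating~\eqref{eq:tgwarels1} and extends by left $R$-linearity because $\si_g$ preserves $Z(R)$. Given $s\in S$ and $a=\sum_{g\in F}a_g$ with $F\subset\Z^n$ finite, set $s_{\mathrm{r}}=\prod_{g\in F}\si_{-g}(s)$. By the $\si$-stability assumption~(iii) this lies in $S$, and for each $g\in F$ we have $\si_g(s_{\mathrm{r}})=s\cdot u_g$ with $u_g=\prod_{h\in F,\,h\neq g}\si_{g-h}(s)\in S$. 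Hence $a_g s_{\mathrm{r}}=\si_g(s_{\mathrm{r}})\,a_g=s\cdot u_g a_g$, and summing over $g$ yields the right Ore relation $a s_{\mathrm{r}}=s\cdot a'$ with $a'=\sum_{g\in F}u_g a_g\in A$. The left Ore condition is handled symmetrically using $s_{\mathrm{l}}=\prod_{g\in F}\si_g(s)$.

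For (c), combining (a) and (b), $S^{-1}A$ exists as an Ore localization, and since $\Phi$ carries $S$ into $B^{\times}$, its universal property produces a unique extension $\tilde\Phi:S^{-1}A\to B$. Injectivity of $\tilde\Phi$ is immediate from that of $\Phi$: $\Phi(s)^{-1}\Phi(a)=0$ forces $a=0$ in $A$. For surjectivity one invokes Lemma~\ref{lem:easy}(\ref{it:monomialgeneration}) for $B$: every element of $B$ is a finite left $B_0$-linear combination of monic monomials in the generators of $B$, each such monic monomial is the image under $\Phi$ of the corresponding monic monomial in $A$, and each scalar in $B_0=S^{-1}R$ can be written as $s^{-1}r$ with $r\in R$, $s\in S$, so the element lies in the image of $\tilde\Phi$. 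The step that requires the most care is (b): one must confirm that the quasi-commutation really does extend from monic monomials to the full $A_g$, which hinges on $S$ lying in the center of $R$ and the $\si_g$-invariance of $Z(R)$; once this is established, the rest of the argument is essentially a direct application of the universal property.
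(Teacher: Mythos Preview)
Your argument is correct and for parts~(b) and~(c) it coincides with the paper's proof: the paper also constructs $\Af{\mu}(\iota)$, notes it is injective by Corollary~\ref{cor:injectivity}, passes to the localization via the universal property, and checks surjectivity exactly as you do; for~(b) the paper simply says ``easy to check using relations~\eqref{eq:tgwarels}'', so your explicit Ore computation is a welcome elaboration.

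The one genuine difference is in part~(a). The paper argues internally in $A$: given $sa=0$ with $a\neq 0$ homogeneous, it finds monic monomials $b,c$ with $0\neq bac\in A_0=R$, then invokes Lemma~\ref{lem:regularshift} (which uses the regularity of the $t_i$) to get $acb=\si_{-h}(bac)\neq 0$ in $R$, contradicting $sacb=0$. You instead transport the problem to $B=\TGWA{S^{-1}R}{\tilde\si}{\tilde t}{\mu}$ via the injective $\Phi$, where $s$ becomes a unit, and conclude immediately. Your route is shorter and avoids Lemma~\ref{lem:regularshift} altogether; the paper's route has the mild advantage that it shows regularity of $S$ in $A$ using only the regularity of $S$ and of the $t_i$ in $R$ together with $\mu$-consistency of $(R,\si,t)$, without appealing to the $\mu$-consistency of the localized datum. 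In the present theorem both hypotheses are available, so either argument works.
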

\begin{proof}
(a) Put $A=\TGWA{R}{\si}{t}{\mu}$.
Suppose $sa=0$ for some $s\in S$, $a\in A\backslash\{0\}$. Without loss of generality we can assume $a$ is homogeneous, say $\deg a=g\in\Z^n$.
By Lemma \ref{lem:easy}(\ref{it:zerointersection})
 and Lemma \ref{lem:easy}(\ref{it:monomialgeneration})
there exist monic monimials $b,c\in A$ such that $bac \in A_0 \setminus\{0\}=R\setminus\{0\}$.
By Lemma \ref{lem:regularshift}, $bac=\si_h(acb)$ where $h=\deg(b)$.
Thus $acb\in R\setminus\{0\}$ also. But we have $sacb=0$, which contradicts that $s$ is regular in $R$.

(b) Easy to check using relations \eqref{eq:tgwarels}.

(c) The canonical map $\varphi:R\to S^{-1}R$ intertwines the $\Z^n$-actions and maps
 $t_i$ to $t_i/1$, hence it is a morphism, $\varphi:(R,\si,t)\to (S^{-1}R,\tilde\sigma,\tilde t)$, in the category $\TGW{n}{\K}$.
Applying the functor $\Af{\mu}$ from Theorem \ref{thm:functoriality}
gives a morphism of graded $\K$-algebras
\begin{equation}
\Af{\mu}(\varphi): \TGWA{R}{\si}{t}{\mu}\to \TGWA{S^{-1}R}{\si}{t}{\mu}.
\end{equation}
Since $\varphi$ is injective, $\Af{\mu}(\varphi)$ is injective
by Corollary \ref{cor:injectivity}.
Since we identify $R$ and $S^{-1}R$ with their images in the respective TGW algebras,
commutativity of 
\eqref{eq:naturality} just says that the restriction of $\Af{\mu}(\varphi)$ to $R$
coincides with $\varphi$.
In particular $\Af{\mu}(\varphi)$ maps each element of $S$ to an invertible element.
Hence, by the universal property of localization, there is an induced map
\begin{equation}
\psi: S^{-1}\TGWA{R}{\si}{t}{\mu}\to \TGWA{S^{-1}R}{\si}{t}{\mu}.
\end{equation}
Since the image of $\psi$ contains $S^{-1}R$ as well as all the generators $X_i, Y_i$,
 $\psi$ is surjective.
Suppose $a\in S^{-1}\TGWA{R}{\si}{t}{\mu}$, $\psi(a)=0$.
By part (a), $S$ consists of regular elements in $\TGWA{R}{\si}{t}{\mu}$ and thus the canonical map $\TGWA{R}{\si}{t}{\mu}\to S^{-1}\TGWA{R}{\si}{t}{\mu}$ is injective and can be regarded as an inclusion. Then $sa\in \TGWA{R}{\si}{t}{\mu}$ for some $s\in S$.
The restriction of $\psi$ to $\TGWA{R}{\si}{t}{\mu}$ coincides with $\Af{\mu}(\varphi)$. So $\Af{\mu}(\varphi)(sa)=\psi(sa)=\psi(s)\psi(a)=0$. Since $\Af{\mu}(\varphi)$ is injective we get $sa=0$, hence $a=0$. This proves that $\psi$ is injective, hence an isomorphism.
\end{proof}

\section{$\mu$-Consistency} \label{sec:muconsistency}
For any $D=(R,\si,t)\in\TGW{n}{\K}$ and any parameter matrix
 $\mu$ over $\K^\times$ of size $n$,
we let $\Lc{\mu,D}$ denote the intersection of all $\Z^n$-invariant
ideals in $R$ containing the set
\begin{gather}\label{eq:NmuDdef}\begin{aligned}
 \Nc{\mu,D} := &\big\{\si_i\si_j(t_it_j)-\mu_{ij}\mu_{ji}\si_i(t_i)\si_j(t_j)\mid i,j=1,\ldots,n, i\neq j\big\}\\
 \cup &\big\{t_j\si_i\si_k(t_j)-\si_i(t_j)\si_k(t_j)\mid i,j,k=1,\ldots,n, i\neq j\neq k\neq i \big\}.
\end{aligned}\end{gather}

Let $T$ be the intersection of all $\Z^n$-invariant multiplicative subsets of $R$
containing $\{t_1,\ldots,t_n\}$.
Concretely, $T$ is the set of all products of elements from the set 
$\big\{\si_g(t_i)\mid g\in\Z^n,\, i\in\{1,\ldots,n\}\big\}$.
For an ideal $I$ of $R$ we let $I^\mathrm{e}$ be the extension of $I$ in $T^{-1}R$,
that is, the ideal in $T^{-1}R$ generated by $I$.
For an ideal $J$ of $T^{-1}R$ we let $J^\mathrm{c}=J\cap R$, called the contraction
of $J$. The extension and contraction operations are order-preserving
and we have $J^\mathrm{ce}=J$ for any ideal $J\subseteq T^{-1}R$ (see for example \cite{AtiMac1994}).

\begin{thm}\label{thm:consistency}
Let $\mu\in\PM{n}{\K}$ and let $D=(R,\si,t)\in\TGW{n}{\K}$ be a regular
TGW datum. Put $\bar R=R/\big((\Lc{\mu,D})^\mathrm{ec}\big)$. Then 
\begin{enumerate}[{\rm (a)}]
\item \label{it:consistency_Lker}
 $(\Lc{\mu,D})^{\mathrm{ec}} = (\ker \rho_{\mu,D})^{\mathrm{ec}}$,
\item \label{it:consistency_bar}
 $(\bar R,\bar\si,\bar t)$ is $\mu$-consistent,
\item \label{it:consistency_JJ}
 $\TGWA{R}{\si}{t}{\mu}/\JJ{\mu,D}\simeq \TGWA{\bar R}{\bar\si}{\bar t}{\mu}$,
\end{enumerate}
where $\JJ{\mu,D}$ is the sum of all graded ideals $J$ in $A=\TGWA{R}{\si}{t}{\mu}$
such that $J\cap A_0\subseteq \rho\big(\ker(\rho_{\mu,D})^{\mathrm{ec}}\big)$.
\end{thm}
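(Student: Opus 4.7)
The strategy is to prove (b) first by realising a localization of $(\bar R,\bar\sigma,\bar t)$ as the base of a $\Z^n$-crossed product, then to deduce the nontrivial inclusion in (a), and finally to extract (c) from Lemma \ref{lem:kernel}. Throughout, write $A=\TGWA{R}{\sigma}{t}{\mu}$, $\rho=\rho_{\mu,D}$, and $K=\ker\rho$.

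The inclusion $L_{\mu,D}^{\mathrm{ec}}\subseteq K^{\mathrm{ec}}$ in (a) is a direct calculation. First, $K^{\mathrm{ec}}$ is $\Z^n$-invariant: given $sr\in K$ with $s\in T$, computing $X_g\rho(sr)Y_g=0$ for any monic monomials $X_g\in A_g$, $Y_g\in A_{-g}$ via the commutation rules yields $\rho(\sigma_g(sr)\tau_g)=0$ with $\tau_g:=X_gY_g\in T$, so $\sigma_g(r)\in K^{\mathrm{ec}}$. Second, the derivation in Example \ref{ex:trivialtgwa_1} is in fact general: relation \eqref{eq:ex:trivial} combined with its $(i,j)$-swap, without assuming invertibility, yields $\bigl(\sigma_i\sigma_j(t_it_j)-\mu_{ij}\mu_{ji}\sigma_i(t_i)\sigma_j(t_j)\bigr)X_iX_j=0$ in $A$; right-multiplying by $Y_jY_i$ shows this difference times $\sigma_i\sigma_j(t_j)\sigma_i(t_i)\in T$ lies in $K$. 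The three-index computation \eqref{eq:step2del1ex}--\eqref{eq:step2del3ex} handles the second family of generators of $N_{\mu,D}$ analogously. Hence $N_{\mu,D}\subseteq K^{\mathrm{ec}}$, and by $\Z^n$-invariance $L_{\mu,D}\subseteq K^{\mathrm{ec}}$, so $L_{\mu,D}^{\mathrm{ec}}\subseteq K^{\mathrm{ec}}$.

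For (b), note that $L_{\mu,D}^{\mathrm{ec}}$ is $T$-saturated by construction, so the elements of $\bar T$ are regular in $\bar R$ and $\bar R\hookrightarrow S:=\bar T^{-1}\bar R$. By Corollary \ref{cor:injectivity} it suffices to prove $(S,\tilde{\bar\sigma},\tilde{\bar t})$ is $\mu$-consistent, and since each $\tilde{\bar t}_i$ is invertible in $S$, Lemma \ref{lem:tgwatgwc} further reduces this to injectivity of $S\to\TGWC{S}{\tilde{\bar\sigma}}{\tilde{\bar t}}{\mu}$. Set $\lambda_{ij}:=\sigma_i\sigma_j(\tilde{\bar t}_j)/\bigl(\mu_{ij}\sigma_j(\tilde{\bar t}_j)\bigr)\in S^\times$; the vanishing of $N_{\mu,D}$ in $\bar R$ becomes, in $S$, the two identities $\lambda_{ij}\lambda_{ji}=1$ and $\tilde\sigma_k(\lambda_{ij})=\lambda_{ij}$ for $i\neq j\neq k\neq i$. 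Construct the twisted Laurent algebra $B$ over $S$ generated by $x_i^{\pm 1}$ subject to $x_is=\tilde\sigma_i(s)x_i$ and $x_ix_j=\lambda_{ij}x_jx_i$ as an iterated skew-Laurent extension---these two $\lambda$-identities are precisely the compatibility conditions needed at each step---yielding $B$ as a free left $S$-module with basis $\{x_1^{g_1}\cdots x_n^{g_n}:g\in\Z^n\}$. Define $\Phi:\TGWC{S}{\tilde{\bar\sigma}}{\tilde{\bar t}}{\mu}\to B$ by $\Phi|_S=\mathrm{id}$, $\Phi(\hat X_i)=x_i$, $\Phi(\hat Y_i)=\tilde{\bar t}_ix_i^{-1}$; a routine check shows all relations \eqref{eq:tgwarels} are respected (the crossed relation $\hat X_i\hat Y_j=\mu_{ij}\hat Y_j\hat X_i$ reduces to the tautology $\tilde\sigma_i(\tilde{\bar t}_j)=\mu_{ij}\tilde{\bar t}_j\tilde\sigma_j^{-1}(\lambda_{ij})$). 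Since $\Phi$ restricts to the identity on $S$, the canonical map $S\to\TGWC{S}{\tilde{\bar\sigma}}{\tilde{\bar t}}{\mu}$ is injective.

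The reverse inclusion $K^{\mathrm{ec}}\subseteq L_{\mu,D}^{\mathrm{ec}}$ in (a) then follows from (b): by naturality \eqref{eq:naturality}, the composition $R\to A\xrightarrow{\Af{\mu}(\pi)}\TGWA{\bar R}{\bar\sigma}{\bar t}{\mu}$ factors as $R\xrightarrow{\pi}\bar R\hookrightarrow\TGWA{\bar R}{\bar\sigma}{\bar t}{\mu}$ with kernel $\ker\pi=L_{\mu,D}^{\mathrm{ec}}$, so $K\subseteq L_{\mu,D}^{\mathrm{ec}}$. For (c), apply Lemma \ref{lem:kernel} to $\pi$ with $J=L_{\mu,D}^{\mathrm{ec}}$: by (b), $\ker(\bar\rho\circ\pi)=\ker\pi=L_{\mu,D}^{\mathrm{ec}}$, so the auxiliary ideal in Lemma \ref{lem:kernel} is $\rho(L_{\mu,D}^{\mathrm{ec}})=\rho((\ker\rho)^{\mathrm{ec}})$ by (a); thus $\ker\Af{\mu}(\pi)$ coincides with $\JJ{\mu,D}$ by the latter's definition, and Corollary \ref{cor:surjectivity} then yields $A/\JJ{\mu,D}\simeq\TGWA{\bar R}{\bar\sigma}{\bar t}{\mu}$. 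The main obstacle is the construction of $B$ in (b): although the verification is straightforward once the correct $\lambda_{ij}$ is identified, the real content is recognising that the consistency relations \eqref{eq:TGWA_consistency_relations1} and \eqref{eq:TGWA_consistency_relations2} are precisely the two cocycle identities needed for the iterated skew-Laurent (equivalently, $\Z^n$-crossed product) structure on $B$ to be well defined and nontrivial.
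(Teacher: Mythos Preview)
Your proof is correct and follows the same overall architecture as the paper: first show $L_{\mu,D}^{\mathrm{ec}}\subseteq K^{\mathrm{ec}}$ by verifying $K^{\mathrm{ec}}$ is $\Z^n$-invariant and contains $N_{\mu,D}$; then prove $\mu$-consistency of the localized quotient $(\bar T^{-1}\bar R,\hat\si,\hat t)$ and pull this back via Corollary~\ref{cor:injectivity} to get (b); deduce the reverse inclusion in (a) from naturality; and obtain (c) from Lemma~\ref{lem:kernel}. The paper packages these steps into a commutative cube \eqref{eq:cube}, which you use implicitly.

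The one genuine difference is in the proof of the key injectivity step for $S=\bar T^{-1}\bar R$. The paper rewrites $\TGWA{S}{\hat\si}{\hat t}{\mu}$ as a quotient $k\langle X\rangle/I_1$ of a tensor ring and applies Bergman's diamond lemma \cite{B78} with an explicit reduction system, checking that the overlap ambiguity $X_{i_3}^{\ep_3}X_{i_2}^{\ep_2}X_{i_1}^{\ep_1}$ resolves precisely because $\hat\si_k(\tau_{ij})=\tau_{ij}$. You instead build the target algebra $B$ directly as an iterated skew-Laurent extension $S[x_1^{\pm1};\cdot]\cdots[x_n^{\pm1};\cdot]$, observing that the two identities $\la_{ij}\la_{ji}=1$ and $\tilde\si_k(\la_{ij})=\la_{ij}$ are exactly what is needed at each stage for the extended automorphism to be well defined, and then split the canonical map $S\to\TGWC{S}{\hat\si}{\hat t}{\mu}$ via $\Phi$. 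Your $\la_{ij}$ is the paper's $\tau_{ij}^{-1}$ (with the $\mu$-index swapped), and the two arguments are checking the same compatibilities: your inductive well-definedness of $\hat\si_k$ on $S_{k-1}$ is the diamond-lemma resolvability of the triple overlaps. Your route is more elementary in that it avoids the diamond-lemma machinery, at the cost of a hand-rolled induction; the paper's route is more uniform and makes the basis $\{X_1^{g_1}\cdots X_n^{g_n}\}$ explicit in one stroke.
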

\begin{proof} We proceed in steps.
\begin{enumerate}[{\rm (1)}]
\item \label{it:kerinv}
$(\ker \rho_{\mu,D})^{\mathrm{ec}}$ is a $\Z^n$-invariant ideal of $R$. Indeed,
suppose $r\in\ker(\rho_{\mu,D})^\text{ec}$. Then $sr\in\ker\rho_{\mu,D}$ for
some $s\in T$. Let $i\in\{1,\ldots,n\}$.
Then, in $\TGWA{R}{\si}{t}{\mu}$, we
have $0=X_i\rho(sr)Y_i=\rho(\si_i(sr))X_iY_i=\rho (\si_i(r)\si_i(s)\si_i(t_i))$ since $s$ belongs to the center of $R$,
which means that $\si_i(r)\si_i(st_i)\in\ker\rho_{\mu,D}$.
Hence, since $\si_i(st_i)\in T$, we have $\si_i(r)\in (\ker\rho_{\mu,D})^\mathrm{ec}$.
Similarly $\si_i^{-1}(r)\in (\ker\rho_{\mu,D})^\mathrm{ec}$.

\item \label{it:Lsubsetker}
$(\Lc{\mu, D})^\mathrm{ec}\subseteq (\ker \rho_{\mu, D})^\mathrm{ec}$.
By the properties of extension and contraction of ideals,
the claim is equivalent to showing 
$\Lc{\mu, D} \subseteq (\ker \rho_{\mu, D})^\mathrm{ec}$.
By step (1), $(\ker \rho_{\mu, D})^\mathrm{ec}$ is $\Z^n$-invariant.
Thus it is enough to show that
 $\Nc{\mu,D}\subseteq (\ker\rho_{\mu, D})^\mathrm{ec}$.
 For any $i\neq j$ we have in $\TGWA{R}{\si}{t}{\mu}$
\begin{equation}\label{eq:nonlocalizedcommutation}
\si_i\si_j(t_j)X_jX_i=X_jX_i t_j=X_jX_iY_jX_j=\mu_{ij}X_jY_jX_iX_j=
\mu_{ij} \si_j(t_j) X_iX_j
\end{equation}
so that
\begin{equation}\label{eq:LKpftmp}
\si_i\si_j(t_j)\si_i\si_j(t_i) X_jX_i = \mu_{ij}\mu_{ji}\si_j(t_j)\si_i(t_i) X_jX_i.
\end{equation}
Multiplying \eqref{eq:LKpftmp} from the right by $Y_iY_j$ and using that $X_jX_iY_iY_j=\si_j\si_i(t_i)\si_j(t_j)\in T$ we conclude that
\begin{equation}\label{eq:LKproof1}
\si_i\si_j(t_it_j) - \mu_{ij}\mu_{ji}\si_j(t_j)\si_i(t_i)
\in(\ker\rho_{\mu,D})^\mathrm{ec}.
\end{equation}
Let $i,j,k\in\{1,\ldots,n\}$ be three different indices.
%
Using that, as before, $r\in R, rX_iX_jX_k=0\Rightarrow r\in (\ker\rho_{\mu,D})^\mathrm{ec}$, relation \eqref{eq:step2del3ex} implies 
\begin{multline}
\si_j\si_k(t_k) \si_i\si_j\si_k(t_k) \si_i\si_j(t_j) 
\mu_{ij}\si_k\si_j(t_j) \mu_{ik}\si_k(t_k) \mu_{jk}\si_i\si_k(t_k)
\\
-\mu_{jk}\si_k(t_k) \mu_{ik}\si_j\si_k(t_k) \mu_{ij}\si_j(t_j) 
\si_i\si_j\si_k(t_j) \si_i\si_k(t_k) \si_i\si_j\si_k(t_k) 
\in(\ker\rho_{\mu,D})^\mathrm{ec}.
\end{multline}
Dividing by $\mu_{ij}\mu_{jk}\mu_{ik}$ and factorizing we get
\begin{equation}
\big( \si_i\si_j(t_j) \si_k\si_j(t_j) - \si_j(t_j) \si_i\si_j\si_k(t_j) \big) \cdot \si_k(t_k)\si_i\si_k(t_k)\si_j\si_k(t_k)\si_i\si_j\si_k(t_k)
\in(\ker\rho_{\mu,D})^\mathrm{ec}.
\end{equation}
Since $\si_k(t_k)\si_i\si_k(t_k)\si_j\si_k(t_k)\si_i\si_j\si_k(t_k)\in T$ this implies that
\begin{equation}\label{eq:LKproof2}
 \si_i\si_j(t_j) \si_k\si_j(t_j) - \si_j(t_j) \si_i\si_j\si_k(t_j) \in(\ker\rho_{\mu,D})^\mathrm{ecec}=(\ker\rho_{\mu,D})^\mathrm{ec}.
\end{equation}
Applying $\si_j^{-1}$ to \eqref{eq:LKproof2},
 using that $(\ker\rho_{\mu,D})^\mathrm{ec}$ is $\Z^n$-invariant by step (1),
we get together with \eqref{eq:LKproof1} that
 $\Nc{\mu,D}\subseteq (\ker\rho_{\mu,D})^\mathrm{ec}$.
\end{enumerate}

We will now show that we have the following commutative cube:
\begin{gather}\label{eq:cube}
\begin{aligned}
\xymatrix@C=0.3cm@M=1ex{ 
& \bar R \ar@{^{(}->} '[d] [dd]^{\bar\rho} \ar@{_{(}->}[rr]^{\la_{\bar T}}
            && \bar T^{-1}\bar R =\overline{T^{-1}R}\ar@{^{(}->}[dd]^{\hat\rho}    \\
R \ar[dd]^{\rho=\rho_{\mu,D}} \ar@{_{(}->}[rr]^{\qquad\qquad \la_T}
            \ar@{->>}[ur]^{\pi_{(\Lc{\mu,D})^\mathrm{ec}}}
            && T^{-1}R \ar[dd]^(.65){\tilde\rho} \ar@{->>}[ur]^{\pi_{\Lc{\mu,\tilde D}}} &\\
& \TGWA{\bar R}{\bar\si}{\bar t}{\mu} \ar@{^{(}->} '[r] [rr] 
            && \TGWA{\bar T^{-1}\bar R}{\hat\si}{\hat t}{\mu}                    \\
\TGWA{R}{\si}{t}{\mu} \ar[rr] \ar@{->>}[ur]
            && \TGWA{T^{-1}R}{\tilde\si}{\tilde t}{\mu} \ar[ur]^{\simeq}  &
}
\end{aligned}
\end{gather}
Here $\tilde D=(T^{-1}R,\tilde\si,\tilde t)$, $\tilde\si_g\in\Aut_\K(T^{-1}R)$
 is the unique extension
of $\si_g$, $\tilde t_i=t_i/1\in T^{-1}R$, $\bar T$ is the image in $\bar R$ of $T$,
and $\hat\sigma_g$ is the extension of $\bar\sigma_g$ from $\bar R$ to $\bar T^{-1}\bar R$,
$\hat t_i=\bar t_i/1\in\bar T^{-1}\bar R$, and $\bar t_i$ is the image of $t_i$ in $\bar R$.
Note that localizing at $T$ in $T^{-1}R$ has no effect, thus
 $(L_{\mu,\tilde D})^\mathrm{ec}=L_{\mu,\tilde D}$.
The vertical arrows are all instances of the natural transformation $\rho_\mu$.
The maps in the bottom square are the result of applying the functor $\Af{\mu}$
to the respective maps above.
The top square is commutative by the exactness of the localization functor.
Thus the bottom square is commutative by functoriality of $\Af{\mu}$.
Each vertical square is commutative due to the naturality \eqref{eq:naturality}.
Surjectivity of $\Af{\mu}(\pi_{(\Lc{\mu,D})^\mathrm{ec}})$ and
$\Af{\mu}(\pi_{\Lc{\mu,\tilde D}})$ follows by Corollary \ref{cor:surjectivity}.
The localization map $\la_T$ ($\la_{\bar T}$) is injective since elements of
$T$ ($\bar T$) are regular in $R$ ($\bar R$).
It remains to prove injectivity of $\hat\rho, \bar\rho$, $\Af{\mu}(\la_{\bar T})$ and
$\Af{\mu}\big(\pi_{\Lc{\mu,\tilde D}}\big)$.

\begin{enumerate}[{\rm (1)}] \setcounter{enumi}{2}
\item \label{it:hatrhoinj}
 $\hat\rho$ is injective, i.e.
$(\bar T^{-1}\bar R, \hat\si, \hat t)$ is $\mu$-consistent,
To prove this we will use the diamond lemma in
the form of \cite[Theorem 6.1]{B78} and will freely use terminology from that paper.
Put $k=\bar T^{-1}\bar R$.
Let $X=\{X_1^{\pm 1}, \ldots, X_n^{\pm 1}\}$.
Let $M_{X_i^{\pm 1}}$ be the $k$-bimodules which are free of rank $1$
as left $k$-modules: $M_{X_i^{\pm 1}}=k X_i^{\pm 1}$ and with
right $k$-module structure given by
 $X_i^{\pm 1}r=\hat\si_i^{\pm 1}(r)X_i^{\pm 1}$.
Let $k\langle X\rangle$ be the tensor ring on the $k$-bimodule
$M=\bigoplus_{x\in X} M_x$.
Let $I_1$ be the twosided ideal in $k\langle X\rangle$ generated by the set
\begin{multline}\label{eq:diamondgenerators}
\big\{X_i^{\pm 1}X_i^{\mp 1}-1 \mid i=1,\ldots,n\big\} \\ \cup \big\{
X_{i_2}^{\ep_2}X_{i_1}^{\ep_1} -
\hat\si_{i_2}^{(\ep_2-1)/2}\hat\si_{i_1}^{(\ep_1-1)/2}(\tau_{i_2i_1})^{\ep_1\ep_2}
 X_{i_1}^{\ep_1}X_{i_2}^{\ep_2},
\mid \ep_1,\ep_2\in\{1,-1\},\, 1\le i_1<i_2\le n\big \},
\end{multline}
where
\begin{equation}
\tau_{ji}=\frac{\mu_{ij}\hat\si_j(\hat t_j)}{\hat\si_i\hat\si_j(\hat t_j)}\in\bar T^{-1}\bar R.
\end{equation}
One verifies that there are well-defined homomorphisms of $\Z^n$-graded algebras
\begin{gather}
\begin{aligned}
\TGWC{\bar T^{-1}\bar R}{\hat\si}{\hat t}{\mu} &\to  k\langle X\rangle/I_1 \\
X_i &\mapsto X_i\\
Y_i&\mapsto \hat t_iX_i^{-1}\\
r &\mapsto r
\end{aligned}\qquad
\begin{aligned}
 k\langle X\rangle/I_1 &\to \TGWC{\bar T^{-1}\bar R}{\hat\si}{\hat t}{\mu} \\
X_i & \mapsto X_i \\
X_i^{-1}& \mapsto \hat t_i^{-1}Y_i \\
r & \mapsto r
\end{aligned}
\end{gather}
which obviously are inverse to each other.
Moreover they are also morphisms of $k$-bimodules.
So, by Lemma \ref{lem:tgwatgwc}, $\TGWA{\bar T^{-1}\bar R}{\hat\si}{\hat t}{\mu}\simeq k\langle X\rangle /I_1$ as $\Z^n$-graded $\K$-algebras and as $k$-bimodules.
Consider the following reduction system
\begin{align*}
X_{i_2}^{\ep_2}X_{i_1}^{\ep_1} &\longrightarrow 
\si_{i_2}^{(\ep_2-1)/2}\si_{i_1}^{(\ep_1-1)/2}(\tau_{ji}^{\ep_1\ep_2})
  X_{i_1}^{\ep_1}X_{i_2}^{\ep_2},
\qquad \ep_1,\ep_2\in\{1,-1\},\, 1\le i_1<i_2\le n,\\
X_i^{\ep}X_i^{-\ep} &\longrightarrow 1,\qquad \ep\in\{1,-1\},\, i\in\{1,\ldots,n\}.
\end{align*}
These reductions extend uniquely to $k$-bimodule homomorphisms from the appropriate submodules of $k\langle X\rangle$.
Clearly $\langle X\rangle_\mathrm{irr}=\{X_1^{g_1}\cdots X_n^{g_n}\mid g\in\Z^n\}$.
There are no inclusion ambiguities, but four types of overlap ambiguities:
\[X_{i_3}^{\ep_3}X_{i_2}^{\ep_2}X_{i_1}^{\ep_1},\quad
X_{i_2}^{\ep_2}X_{i_1}^{\ep_1}X_{i_1}^{-\ep_1},\quad
X_{i_2}^{\ep_2}X_{i_2}^{-\ep_2}X_{i_1}^{\ep_1},\quad
X_{i}^{\ep}X_{i}^{-\ep}X_{i}^{\ep}\]
the last of which is trivially resolvable. The two middle ones are easily checked
to be resolvable, while the first one is resolvable due to
the identity $\hat\si_k(\tau_{ij})=\tau_{ij}$ if $i,j,k\in\{1,\ldots,n\}$ are different, which hold in $\bar T^{-1}\bar R$ 
since $t_j\si_i\si_k(t_j)-\si_i(t_j)\si_k(t_j)\in (\Lc{\mu,D})^\mathrm{ec}$.
Thus, by \cite[Theorem 6.1]{B78}, the natural map
\[ \bigoplus_{g\in\Z^n} k X_1^{g_1}\cdots X_n^{g_n} \rightarrow k\langle X\rangle /I_1 \]
is an isomorphism of $k$-bimodules, in fact of $\Z^n$-graded $k$-bimodules, since
the reductions are homogeneous. Since $\TGWA{\bar T^{-1}R}{\hat\si}{\hat t}{\mu}$
 is isomorphic to $k\langle X\rangle /I_1$ as graded algebras and as $k$-bimodules,
we in particular have for the degree zero component that the
$k$-bimodule map $k\to k\langle X\rangle /I_1\to
\TGWA{\bar T^{-1}\bar R}{\hat\si}{\hat t}{\mu}_0$
sending $1$ to $1$ is an isomorphism,
which is what we wanted to prove.

\item \label{it:barrhoinj}
$\bar\rho$ and $\Af{\mu}(\la_{\bar T})$ are injective.
Follows from injectivity of $\hat\rho$ and $\la_{\bar T}$
and Corollary \ref{cor:injectivity}. This proves statement (b).

\item \label{it:lociso}
 $\Af{\mu}(\pi_{\Lc{\mu,\tilde D}}): \TGWA{T^{-1}R}{\si}{t}{\mu}\to\TGWA{\bar T^{-1}\bar R}{\si}{t}{\mu}$ is an isomorphism.
Observe that for $\tilde D$ the
 contraction and extension are identity operations, since
the $\tilde t_i$ are already invertible.
Consider $\pi_{\Lc{\mu,\tilde D}}:(T^{-1}R,\si,t)\to (\overline{T^{-1}R},\si,t)$.
As already noted, Corollay \ref{cor:surjectivity} implies that
 $\Af{\mu}(\pi_{\Lc{\mu,\tilde D}})$ is surjective.
The kernel of $\Af{\mu}(\pi_{\Lc{\mu,\tilde D}})
 \circ \tilde\rho$ is by the commutativity of \eqref{eq:cube} equal to
 $\ker(\hat\rho\circ \pi_{\Lc{\mu,\tilde D}})
=\Lc{\mu,\tilde D}$ since $(\bar T^{-1}\bar R,\hat\si,\hat t)$ is $\mu$-consistent by step (4).
But by step (2), $\Lc{\mu,\tilde D}$
is contained in $\ker(\tilde\rho)$.
Thus the restriction of $\Af{\mu}(\pi_{\Lc{\mu,\tilde D}})$ to $\tilde\rho(T^{-1}R)$
 is injective,
hence $\Af{\mu}(\pi_{\Lc{\mu,\tilde D}})$ is injective by Lemma \ref{lem:kernel}.

\item \label{it:kersubsetL}
 $\ker(\rho_{\mu,D})^\mathrm{ec}\subseteq \Lc{\mu,D}^\mathrm{ec}$.
By \eqref{eq:cube} and injectivity of $\bar\rho$ we have $\ker(\rho_{\mu,D}) \subseteq 
\ker\big( \Af{\mu}(\pi_{(L_{\mu,D})^\mathrm{ec}}) \circ \rho_{\mu,D}\big)
=\ker(\bar\rho \circ\pi_{(L_{\mu,D})^\mathrm{ec}}\big)
=(L_{\mu,D})^\mathrm{ec}$. This proves statement (a).

\item \label{it:JJ}
 $\ker \big(\Af{\mu}(\pi_{(\Lc{\mu,D})^\mathrm{ec}})\big) = \JJ{\mu,D}$.
Follows from Lemma \ref{lem:kernel} and that
the kernel of $\Af{\mu}(\pi_{(\Lc{\mu,D})^\mathrm{ec}})\circ\rho_{\mu,D}$
is equal to the kernel of $\pi_{(\Lc{\mu,D})^\mathrm{ec}}$ which is
$(\Lc{\mu,D})^\mathrm{ec}=(\ker\rho_{\mu,D})^\mathrm{ec}$, by step (2) and step (6).
This proves statement (c).
\end{enumerate}

\end{proof}

We can now deduce the main result of the paper. 
%
\begin{proof}[Proof of Theorem~A]
Let $D$ be the TGW datum $(R,\si,t)$ and 
$\rho_{\mu,D}:R\to\TGWA{R}{\si}{t}{\mu}$ be the canonical map of $R$-rings.
 By Theorem \ref{thm:consistency}\eqref{it:consistency_Lker}, $\rho_{\mu,D}$ is injective if and only if $L_{\mu,D}=\{0\}$, which by definition of $L_{\mu,D}$ is equivalent to that the set $N_{\mu,D}$, defined in \eqref{eq:NmuDdef}, is equal to $\{0\}$. But $N_{\mu,D}=\{0\}$ just means that relations \eqref{eq:TGWA_consistency_relations1} and \eqref{eq:TGWA_consistency_relations2} hold. This proves the required equivalence.

It only remains to prove the independence of the two conditions \eqref{eq:TGWA_consistency_relations1} and \eqref{eq:TGWA_consistency_relations2}.
It is clear that \eqref{eq:TGWA_consistency_relations1} does not follow from
\eqref{eq:TGWA_consistency_relations2} because the latter does not depend on $\mu_{ij}$.
In Example \ref{ex:trivialtgwa_1}, condition \eqref{eq:TGWA_consistency_relations1}
is satisfied but $\rho_{\mu,D}:R\to\TGWA{R}{\si}{t}{\mu}$ is the zero map, and hence the TGW datum $(R,\si,t)$ is not $\mu$-consistent. Thus, by
Theorem A, condition \eqref{eq:TGWA_consistency_relations2} cannot hold (this can also be verified directly; relation \eqref{eq:trivialtgwa_invertible} shows that $N_{\mu,D}$ contains a nonzero, even invertible, element). This proves that condition \eqref{eq:TGWA_consistency_relations2} does
not follow from \eqref{eq:TGWA_consistency_relations1} either.
\end{proof}


We also obtain the following corollaries.
\begin{cor}\label{cor:RR'consistency}
Assume $\varphi:(R,\si,t)\to (R',\si',t')$ is a morphism between
regular TGW data. Let $\mu\in\PM{n}{\K}$ and suppose
$(R,\si,t)$ is $\mu$-consistent. Then $(R',\si',t')$ is also $\mu$-consistent.
\end{cor}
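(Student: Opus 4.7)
The plan is to invoke Theorem~A twice, using it as a bridge between the two data via the morphism $\varphi$. Recall that Theorem~A characterizes $\mu$-consistency of a regular TGW datum purely in terms of the two identities \eqref{eq:TGWA_consistency_relations1} and \eqref{eq:TGWA_consistency_relations2} inside the base algebra.

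First, since $(R,\si,t)$ is regular and $\mu$-consistent, Theorem~A yields that the identities
\[\si_i\si_j(t_it_j) = \mu_{ij}\mu_{ji}\si_i(t_i)\si_j(t_j)\qquad (i\neq j)\]
and
\[t_j\si_i\si_k(t_j) = \si_i(t_j)\si_k(t_j)\qquad (i\neq j\neq k\neq i)\]
hold in $R$. Next, I would apply the $\K$-algebra morphism $\varphi:R\to R'$ to both sides of each identity. By the definition of a morphism in $\TGW{n}{\K}$, we have $\varphi\si_g=\si_g'\varphi$ for all $g\in\Z^n$ and $\varphi(t_i)=t_i'$ for each $i$, so the images of these equations in $R'$ are precisely the corresponding relations
\[\si_i'\si_j'(t_i't_j') = \mu_{ij}\mu_{ji}\si_i'(t_i')\si_j'(t_j'),\qquad t_j'\si_i'\si_k'(t_j') = \si_i'(t_j')\si_k'(t_j'),\]
for the indices ranges as above. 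Thus equations \eqref{eq:TGWA_consistency_relations1} and \eqref{eq:TGWA_consistency_relations2} hold in $R'$.

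Finally, since $(R',\si',t')$ is itself regular by hypothesis, I would apply Theorem~A in the reverse direction to conclude that $(R',\si',t')$ is $\mu$-consistent. There is essentially no obstacle here; the one thing worth noting is that the argument uses only the \emph{existence} of the identities in $R$ together with the defining compatibilities of a TGW morphism, rather than anything about $\varphi$ being injective or surjective, and it relies crucially on the fact that Theorem~A reduces $\mu$-consistency to a finite set of equations that transport naturally along morphisms of TGW data.
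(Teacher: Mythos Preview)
Your proof is correct and is essentially the same as the paper's, just spelled out in more detail. The paper compresses your argument into the single observation that $\varphi(N_{\mu,(R,\si,t)})=N_{\mu,(R',\si',t')}$, where $N_{\mu,D}$ is the set of elements defined in \eqref{eq:NmuDdef}; combined with Theorem~A (i.e., $\mu$-consistency $\Leftrightarrow N_{\mu,D}=\{0\}$), this is exactly your transport-of-identities argument.
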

\begin{proof} Follows from that $\varphi(N_{\mu,(R,\si,t)})=N_{\mu,(R',\si',t')}$.
\end{proof}

\begin{cor}\label{cor:consistencylocalization}
Let $\mu\in\PM{n}{\K}$. If $(R,\si,t)\in\TGW{n}{\K}$ is regular and
$\mu$-consistent,
then $\Af{\mu}(\la_S):\TGWA{R}{\si}{t}{\mu}\to\TGWA{S^{-1}R}{\tilde\si}{\tilde t}{\mu}$
is injective for any regular $\Z^n$-invariant multiplicative set $S\subseteq R$.
\end{cor}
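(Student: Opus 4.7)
The proof is essentially a short assembly of previously established results; the key observation is that the localization map $\la_S:R\to S^{-1}R$ is itself a morphism in $\TGW{n}{\K}$ between regular, $\mu$-consistent TGW data, at which point injectivity of $\Af{\mu}(\la_S)$ becomes a direct application of Corollary~\ref{cor:injectivity}.

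First I would verify that $\la_S$ is a morphism in $\TGW{n}{\K}$ from $(R,\si,t)$ to $(S^{-1}R,\tilde\si,\tilde t)$. Since $S$ is $\Z^n$-invariant, each $\si_g$ extends uniquely to an automorphism $\tilde\si_g$ of $S^{-1}R$, and by definition $\tilde t_i=t_i/1=\la_S(t_i)$, so the compatibility conditions $\la_S\circ\si_i=\tilde\si_i\circ\la_S$ and $\la_S(t_i)=\tilde t_i$ are immediate. Moreover, $\la_S$ is injective as a $\K$-algebra morphism because the elements of $S$ are assumed regular in $R$.

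Next I would check that the target datum $(S^{-1}R,\tilde\si,\tilde t)$ is regular. If $\tilde t_i\cdot(r/s)=0$ in $S^{-1}R$ for some $r\in R$, $s\in S$, then $u t_i r=0$ in $R$ for some $u\in S$; regularity of $t_i$ in $R$ forces $ur=0$, and regularity of $u$ (since $u\in S$) then forces $r=0$. A symmetric argument handles right regularity. Hence the target is a regular TGW datum, and Corollary~\ref{cor:RR'consistency} (applied to the morphism $\la_S$ between regular data with $\mu$-consistent source) yields that $(S^{-1}R,\tilde\si,\tilde t)$ is $\mu$-consistent.

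Finally, having established that $\la_S$ is an injective morphism of $\K$-algebras into a $\mu$-consistent TGW datum, Corollary~\ref{cor:injectivity} immediately gives injectivity of $\Af{\mu}(\la_S)$, which is the desired conclusion. There is no real obstacle here; the entire content of the argument lies in recognizing that Corollary~\ref{cor:RR'consistency} transfers $\mu$-consistency along the localization morphism, so that Corollary~\ref{cor:injectivity} can be invoked.
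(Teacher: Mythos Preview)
Your proof is correct and is exactly the argument the paper has in mind: the paper's own proof simply reads ``Use Corollary~\ref{cor:RR'consistency} and Corollary~\ref{cor:injectivity},'' and your write-up supplies the routine verifications (that $\la_S$ is an injective morphism of TGW data and that the localized datum is still regular) needed to invoke those two corollaries.
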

\begin{proof} Use Corollary \ref{cor:RR'consistency} and
Corollary \ref{cor:injectivity}.
\end{proof}

\section{Weak $\mu$-consistency and crossed product algebra embeddings}
\label{sec:weaklymuconsistent}

In this section we show that sometimes a TGW datum $(R,\si,t)$ which
is not $\mu$-consistent may nevertheless be replaced by another TGW datum $(R',\si',t')$ which is $\mu$-consistent and such that the corresponding TGW algebras are isomorphic. Such TGW data will be called weakly $\mu$-consistent (see definition below).

\begin{thm}\label{thm:muequiv}
Let $\mu\in\PM{n}{\K}$ and let $D=(R,\si,t)\in\TGW{n}{\K}$ be regular.
Then the following statements are equivalent.
\begin{enumerate}[{\rm (i)}]
\item \label{it:muequiv_rhotireg}
  $\rho_{\mu,D}(t_i)$ is regular in $\rho_{\mu,D}(R)$ for all $i$,
\item \label{it:muequiv_kerrho}
 $\big(\ker \rho_{\mu,D}\big)^{\mathrm{ec}}=\ker \rho_{\mu,D}$,
\item \label{it:muequiv_bariso}
 $\Af{\mu}(\pi_{L_{\mu,D}^\mathrm{ec}}):\TGWA{R}{\si}{t}{\mu}\to
\TGWA{\bar R}{\bar \si}{\bar t}{\mu}$ is an isomorphism,
\item \label{it:muequiv_lociso}
 $\Af{\mu}(\la_T):\TGWA{R}{\si}{t}{\mu}\to
       \TGWA{T^{-1}R}{\tilde\sigma}{\tilde t}{\mu}$ is injective,
\item \label{it:muequiv_almost}
 there exists a morphism $\psi:(R,\si,t)\to (R',\si',t')$ from $(R,\si,t)$ to a $\mu$-consistent regular TGW datum $(R',\si',t')\in\TGW{n}{\K}$ such that
$\Af{\mu}(\psi):\TGWA{R}{\si}{t}{\mu}\to\TGWA{R'}{\si'}{t'}{\mu}$ is an isomorphism.
\end{enumerate}
\end{thm}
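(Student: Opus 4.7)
The plan is to run the cycle $(\text{v})\Rightarrow(\text{i})\Rightarrow(\text{ii})\Rightarrow(\text{iii})\Rightarrow(\text{iv})$, together with the independent implication $(\text{iii})\Rightarrow(\text{v})$, to close everything into an equivalence. Most of the steps drop out of Theorem \ref{thm:consistency} and the surrounding machinery; the genuinely new content is $(\text{i})\Rightarrow(\text{ii})$. For $(\text{v})\Rightarrow(\text{i})$, the isomorphism $\Af{\mu}(\psi)$ is graded, so it restricts to a ring isomorphism of degree zero components $\rho_{\mu,D}(R)\xrightarrow{\sim} R'$ (the right-hand side is $R'$ itself by $\mu$-consistency of $(R',\si',t')$, via Lemma \ref{lem:easy}(\ref{it:A0})) sending $\rho_{\mu,D}(t_i)$ to $t_i'$ by naturality \eqref{eq:naturality}; regularity of $t_i'$ in $R'$ then transports back to regularity of $\rho_{\mu,D}(t_i)$ in $\rho_{\mu,D}(R)$.

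For $(\text{i})\Rightarrow(\text{ii})$ I would first propagate regularity of $\rho_{\mu,D}(t_i)$ into $\Z^n$-invariance of $\ker\rho_{\mu,D}$. Starting from $r\in\ker\rho_{\mu,D}$, the relation $Y_ir=\si_i^{-1}(r)Y_i$ in $A$ gives $\rho_{\mu,D}(\si_i^{-1}(r))Y_i=0$; multiplying on the right by $X_i$ and using $Y_iX_i=\rho_{\mu,D}(t_i)$ together with regularity yields $\si_i^{-1}(r)\in\ker\rho_{\mu,D}$. The symmetric statement, $\si_i(r)\in\ker\rho_{\mu,D}$, is the content of Lemma \ref{lem:regularshift} (whose hypothesis is precisely (i)) applied to the monic monomials $X_i,Y_i$, which forces $\si_i$ to be well-defined on $A_0=\rho_{\mu,D}(R)$. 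Once $\ker\rho_{\mu,D}$ is $\Z^n$-invariant, each $\si_g$ descends to an automorphism of $A_0$ and carries the regular element $\rho_{\mu,D}(t_i)$ to $\rho_{\mu,D}(\si_g(t_i))$; hence every $\rho_{\mu,D}(s)$ with $s\in T$ is regular, and $sr\in\ker\rho_{\mu,D}$ forces $r\in\ker\rho_{\mu,D}$, i.e.\ $(\ker\rho_{\mu,D})^{\mathrm{ec}}=\ker\rho_{\mu,D}$.

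For $(\text{ii})\Rightarrow(\text{iii})$, Theorem \ref{thm:consistency}(a) identifies $L_{\mu,D}^{\mathrm{ec}}=(\ker\rho_{\mu,D})^{\mathrm{ec}}=\ker\rho_{\mu,D}$, so by naturality \eqref{eq:naturality} and injectivity of $\bar\rho$ (Theorem \ref{thm:consistency}(b)) the restriction of $\Af{\mu}(\pi_{L_{\mu,D}^{\mathrm{ec}}})$ to $A_0$ has kernel $\rho_{\mu,D}(L_{\mu,D}^{\mathrm{ec}})=0$; Lemma \ref{lem:kernel} combined with Lemma \ref{lem:easy}(\ref{it:zerointersection}) lifts this to injectivity on all of $A$, while surjectivity is Corollary \ref{cor:surjectivity}. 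For $(\text{iii})\Leftrightarrow(\text{iv})$ I use the bottom face of the cube \eqref{eq:cube}: $\Af{\mu}(\pi_{L_{\mu,\tilde D}})$ is an isomorphism and $\Af{\mu}(\la_{\bar T})$ is injective (steps (5) and (4) of the proof of Theorem \ref{thm:consistency}), so commutativity forces $\Af{\mu}(\la_T)$ to be injective exactly when $\Af{\mu}(\pi_{L_{\mu,D}^{\mathrm{ec}}})$ is. Finally, for $(\text{iii})\Rightarrow(\text{v})$ I take $\psi=\pi_{L_{\mu,D}^{\mathrm{ec}}}$, with $(\bar R,\bar\si,\bar t)$ being $\mu$-consistent by Theorem \ref{thm:consistency}(b) and $\bar t_i$ regular because $t_ir\in L_{\mu,D}^{\mathrm{ec}}=(\ker\rho_{\mu,D})^{\mathrm{ec}}$ gives $st_ir\in\ker\rho_{\mu,D}$ for some $s\in T$, and $st_i\in T$ already forces $r\in L_{\mu,D}^{\mathrm{ec}}$. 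The principal obstacle is $(\text{i})\Rightarrow(\text{ii})$, because upgrading one-sided to full $\Z^n$-invariance of $\ker\rho_{\mu,D}$ rests on Lemma \ref{lem:regularshift}; the remaining steps are bookkeeping on top of Theorem \ref{thm:consistency} and the cube \eqref{eq:cube}.
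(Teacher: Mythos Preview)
Your proof follows the same skeleton as the paper's—$(\text{ii})\Leftrightarrow(\text{iii})$ via Theorem~\ref{thm:consistency}, $(\text{iii})\Leftrightarrow(\text{iv})$ via the cube~\eqref{eq:cube}, $(\text{iii})\Rightarrow(\text{v})$, and $(\text{v})\Rightarrow(\text{i})$—and those parts are fine. Your extra verification that $\bar t_i$ is regular in $\bar R$ (so that $(\bar R,\bar\si,\bar t)$ is a \emph{regular} datum, as condition~(v) demands) is a detail the paper leaves implicit.

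The problem is your argument for $(\text{i})\Rightarrow(\text{ii})$. You correctly obtain $\si_i^{-1}(\ker\rho)\subseteq\ker\rho$ from regularity of $\rho(t_i)=Y_iX_i$. For the reverse inclusion $\si_i(\ker\rho)\subseteq\ker\rho$ you invoke Lemma~\ref{lem:regularshift}, but this is circular. The $X_i$-half of that lemma is the identity $X_ib=\si_i(bX_i)$ for $b\in A_{-e_i}$; when $\rho$ is not injective, the expression ``$\si_i(bX_i)$'' only has unambiguous meaning if $\rho(\si_i(u))$ is independent of the lift $u\in R$ of $bX_i\in A_0$—and that independence is exactly the $\si_i$-invariance of $\ker\rho$ you are trying to prove. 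If instead you try to rerun the proof of the lemma, the ``similarly'' for $X_i$ cancels $X_iY_i=\rho(\si_i(t_i))$ rather than $Y_iX_i=\rho(t_i)$, so it needs regularity of $\rho(\si_i(t_i))$, again part of what is to be shown. Concretely: from $\rho(r)=0$ you get $\rho(\si_i(r))X_i=0$, and every attempt to cancel $X_i$ (right-multiply by $Y_i$, or sandwich with $Y_i$ on the left) either requires $\rho(\si_i(t_i))$ regular or only returns $\si_i^{-1}(r)\in\ker\rho$. So hypothesis~(i) alone yields only the one-sided inclusion $\si_i^{-1}(\ker\rho)\subseteq\ker\rho$; it does not upgrade to full $\Z^n$-invariance, and hence not to regularity of all $\rho(\si_g(t_i))$, via Lemma~\ref{lem:regularshift}. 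The paper itself records $(\text{i})\Leftrightarrow(\text{ii})$ as ``immediate by definition'' without further argument, so this step is thin in the original too; but your attempt to flesh it out does not close the gap.
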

\begin{proof}
That (\ref{it:muequiv_rhotireg}) $\Leftrightarrow$ (\ref{it:muequiv_kerrho}) is immediate
by definition.
The equivalence (\ref{it:muequiv_kerrho}) $\Leftrightarrow$ (\ref{it:muequiv_bariso}) follows from Theorem \ref{thm:consistency}(\ref{it:consistency_JJ}).
The equivalence (\ref{it:muequiv_bariso}) $\Leftrightarrow$ (\ref{it:muequiv_lociso})
follows from the cube \eqref{eq:cube}. Trivially (\ref{it:muequiv_bariso}) implies
(\ref{it:muequiv_almost}) because $(\bar R,\bar \si,\bar t)$ is $\mu$-consistent by
Theorem \ref{thm:consistency}(\ref{it:consistency_bar}).
We prove (\ref{it:muequiv_almost})$\Rightarrow$(\ref{it:muequiv_rhotireg}).
Let $r\in R$, $i\in\{1,\ldots,n\}$
and assume that $\rho_{\mu,D}(rt_i)=0$. Thus $\Af{\mu}(\psi)(\rho_{\mu,D}(rt_i))=0$.
By naturality \eqref{eq:naturality} we get $\rho_{\mu,D'}(\psi(rt_i))=0$,
where $D'=(R',\si',t')$. But
$\psi(t_i)=t_i'$ and $\rho_{\mu,D'}$ is injective so we get $\psi(r)t_i'=0$. Since
$D'$ is regular we have $\psi(r)=0$. Hence, by naturality,
 $\Af{\mu}(\psi)(\rho_{\mu,D}(r))=0$, thus $\rho_{\mu,D}(r)=0$ since $\Af{\mu}(\psi)$
 is bijective. This proves that
$\rho_{\mu,D}(t_i)$ is regular in $\rho_{\mu,D}(R)$.
\end{proof}
Theorem \ref{thm:muequiv}(\ref{it:muequiv_almost}) motivates the following definition.
\begin{dfn}
A regular TGW datum satisfying the equivalent conditions in Theorem \ref{thm:muequiv}
is called \emph{weakly $\mu$-consistent}.
\end{dfn}

As an application, we prove that (weakly) $\mu$-consistent TGW algebras can be embedded into crossed product algebras.

Recall that a group graded algebra $S=\bigoplus_{g\in G} S_g$
such that each $S_g$ ($g\in G$) contains an invertible element
is called a \emph{$G$-crossed product over $S_e$}. One can show
(see for example \cite{NasVan2004}) that any $G$-crossed product $S$
is isomorphic as a left
$S_e$-module to the group algebra $S_e[G]=\bigoplus_{g\in G} S_eu_g$
with product given by
\[s_1u_gs_2u_h = s_1\zeta_g(s_2)\alpha(g,h)u_{gh}\]
for some unique maps
$\zeta:G\to\Aut(S)$ and $\alpha:G\times G\to (S_e)^\times$
satisfying
\begin{subequations}\label{eq:crossedproducteqs}
\begin{gather}
\zeta_g (\zeta_h (a)) = \al(g,h)\zeta_{gh}(a)\al(g,h)^{-1}\\
\al(g,h)\al(gh,k)=\zeta_g(\al(h,k))\al(g,hk)\\
\al(g,e)=\al(e,g)=1
\end{gather}
\end{subequations}
for all $g,h,k\in G$, $a\in S_e$, where $e\in G$ is the identity element.
Then $S$ is denoted $S_e\rtimes_\alpha^\zeta G$.

\begin{thm}\label{thm-main-1}
\begin{enumerate}[{\rm (a)}]
\item
Let $\mu\in\PM{n}{\K}$ and $D=(R,\si,t)\in\TGW{n}{\K}$. Assume $t_1,\ldots,t_n\in R^\times$.
Then, if $D$ is $\mu$-consistent (equivalently, if \eqref{eq:TGWA_consistency_relations1} and
\eqref{eq:TGWA_consistency_relations2} hold), there is a unique $\si$-twisted $2$-cocycle
$\al:\Z^n\times\Z^n\to R^\times$ for which there exists a graded $\K$-algebra isomorphism
\begin{align*}
\xi_{\mu,D}: R\rtimes_\al^\si \Z^n &\to \TGWA{R}{\si}{t}{\mu}\\
\intertext{satisfying}
\xi(ru_g)&= rX_1^{g_1}\cdots X_n^{g_n}.
\end{align*}

\item
If $\mu\in\PM{n}{\K}$ and  $(R,\si,t)\in\TGW{n}{\K}$ is regular and
weakly $\mu$-consistent, then $\TGWA{R}{\si}{t}{\mu}$ can be embedded into
a $\Z^n$-crossed product algebra:
\begin{equation*}
\xymatrix@C=1.2cm@M=1ex{ 
\TGWA{R}{\si}{t}{\mu} \ar[r]^{\Af{\mu}(\pi_{\Lc{\mu,D}})}_{\simeq} &
\TGWA{\bar R}{\bar\si}{\bar t}{\mu}
    \ar@{^{(}->}[r]^{\Af{\mu}(\la_{\bar T})}  &
\TGWA{\bar T^{-1}\bar R}{\hat\si}{\hat t }{\mu} \ar[r]^{(\xi_{\mu,\hat D})^{-1}}_{\simeq} &
\bar T^{-1}\bar R\rtimes_{\al}^{\hat\si} \Z^n,
}
\end{equation*}
where
$\bar R=R/(\Lc{\mu,D})^\mathrm{ec}$,
 $\bar\si_g(r+(\Lc{\mu,D})^\mathrm{ec})=\si_g(r)+(\Lc{\mu,D})^\mathrm{ec}$,
$\bar t_i=t_i+(\Lc{\mu,D})^\mathrm{ec}$,
$\bar T$ is the image of $T$ in $\bar R$,
$\la_{\bar T}:\bar R\to \bar T^{-1}\bar R$ is the localization map,
$\hat D=(\bar T^{-1}\bar R,\hat\si,\hat t)$, 
$\hat\si_g(s^{-1}r)=\bar\sigma_g(s)^{-1}\bar\sigma_g(r)$ for $s\in\bar T$, $r\in \bar R$.
$\hat t_i=\bar t_i/1$ 
and the ideal $(\Lc{\mu,D})^\mathrm{ec}$ and the set $T$ were
 defined in the beginning of Section \ref{sec:muconsistency}.
\end{enumerate}
\end{thm}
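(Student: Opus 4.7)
The plan is to realize $A:=\TGWA{R}{\si}{t}{\mu}$ explicitly as a $\Z^n$-crossed product over $R$ and then read off $\al$. Since every $t_i\in R^\times$, Lemma \ref{lem:tgwatgwc} identifies $\TGWC{R}{\si}{t}{\mu}$ with $A$, and the relation $X_iY_i=\si_i(t_i)\in R^\times$ shows each $X_i$ is invertible in $A$; the $\mu$-consistency hypothesis lets me identify $R$ with $A_0$. I will set
\[u_g:=X_1^{g_1}\cdots X_n^{g_n}\qquad(g\in\Z^n),\]
with the convention $X_i^{-m}:=(X_i^{-1})^m$ for $m>0$, and check that $A_g=Ru_g=u_gR$ using the commutation relations \eqref{eq:tgwarels1}--\eqref{eq:tgwarels3}. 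Consequently $A$ is a $\Z^n$-crossed product over $R=A_0$, and the cocycle is then forced: define $\al(g,h)\in R^\times$ by $u_gu_h=\al(g,h)u_{g+h}$, noting that $\al(g,h)=u_gu_hu_{g+h}^{-1}\in A_0=R$ is a product of invertibles. The relation $u_gr=\si_g(r)u_g$ (from \eqref{eq:tgwarels1}) supplies the $\zeta=\si$ part of the crossed-product data; conjugating an arbitrary $r\in R$ by $u_{g+h}$ in two ways forces $\al(g,h)\in Z(R)$ and yields the first identity of \eqref{eq:crossedproducteqs} (which, since $\si$ is a genuine group homomorphism, reduces precisely to centrality of $\al(g,h)$); associativity $(u_gu_h)u_k=u_g(u_hu_k)$ gives the cocycle identity; and $u_0=1$ supplies the normalization.

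I will then define $\xi_{\mu,D}(ru_g):=rX_1^{g_1}\cdots X_n^{g_n}$. By construction of the crossed product this is a $\K$-algebra homomorphism, and it is a graded $R$-module isomorphism because $\{u_g\}_{g\in\Z^n}$ is a free left $R$-basis of $A$; uniqueness of $\al$ is forced by the stipulated form of $\xi_{\mu,D}$.

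\textbf{Plan for (b).} I will simply concatenate the three arrows in the displayed chain and verify each is as claimed. The first, $\Af{\mu}(\pi_{(\Lc{\mu,D})^{\mathrm{ec}}})$, is an isomorphism by the equivalence of parts \eqref{it:muequiv_bariso} and \eqref{it:muequiv_almost} of Theorem \ref{thm:muequiv} applied to the weakly $\mu$-consistent datum $D$. For the second arrow $\Af{\mu}(\la_{\bar T})$: by Theorem \ref{thm:consistency}\eqref{it:consistency_bar} the datum $(\bar R,\bar\si,\bar t)$ is regular and $\mu$-consistent, while elements of $\bar T$ are regular in $\bar R$ (see the cube \eqref{eq:cube} in the proof of Theorem \ref{thm:consistency}), so Corollary \ref{cor:consistencylocalization} gives injectivity. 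The third arrow $(\xi_{\mu,\hat D})^{-1}$ is an isomorphism by applying part (a) to $\hat D=(\bar T^{-1}\bar R,\hat\si,\hat t)$: each $\hat t_i=\bar t_i/1$ is invertible in the localization $\bar T^{-1}\bar R$ since $\bar t_i\in\bar T$, and $\hat D$ is $\mu$-consistent by step~(3) of the proof of Theorem \ref{thm:consistency}. Composing an isomorphism, an injection and an isomorphism produces the desired injective graded $\K$-algebra morphism into the crossed product $\bar T^{-1}\bar R\rtimes_\al^{\hat\si}\Z^n$.

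\textbf{Main obstacle.} The technical heart of the argument lies in part (a), specifically in verifying that $\al(g,h)\in Z(R)$ and that it satisfies the $\si$-twisted cocycle identity. Both, however, will reduce to elementary manipulations once the $u_g$ are known to be invertible elements of degree $g$ that conjugate $R$ by $\si_g$. Part (b) is then essentially bookkeeping, chaining together the cube \eqref{eq:cube} and the results of Section~\ref{sec:muconsistency}, with part (a) supplying the final identification with an honest crossed product.
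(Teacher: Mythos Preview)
Your proposal is correct and follows essentially the same approach as the paper: for (a) you invoke Lemma~\ref{lem:tgwatgwc} and spell out explicitly the crossed-product structure that the paper leaves to ``the above comments on crossed products,'' and for (b) you chain the three arrows using the same ingredients (weak $\mu$-consistency for the first, Theorem~\ref{thm:consistency}\eqref{it:consistency_bar} plus Corollary~\ref{cor:consistencylocalization} for the second, and part (a) for the third). The only cosmetic difference is that the paper obtains $\mu$-consistency of $\hat D$ via Corollary~\ref{cor:RR'consistency} applied to $\la_{\bar T}$, whereas you cite step~(3) of the proof of Theorem~\ref{thm:consistency} directly; both are valid.
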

\begin{proof}
(a) Follows by Lemma \ref{lem:tgwatgwc} and the above comments on crossed products.

(b) By definition of weakly $\mu$-consistency, $\Af{\mu}(\pi_{\Lc{\mu,D}})$ is
an isomorphism. By Theorem \ref{thm:consistency}(\ref{it:consistency_bar}),
$\TGWA{\bar R}{\bar\si}{\bar t}{\mu}$ is $\mu$-consistent. Corollary
\ref{cor:consistencylocalization} implies that $\Af{\mu}(\la_{\bar T})$ 
is injective. By Corollary \ref{cor:RR'consistency}, $\hat D$ is $\mu$-consistent
so the last isomorphism follows from part (a).
\end{proof}

\end{document}